\newcolumntype{P}[1]{>{\centering\arraybackslash}p{#1}}
\newtheorem{theorem}{Theorem}[section]
\newtheorem{lemma}[theorem]{Lemma}
\newtheorem{proposition}[theorem]{Proposition}
\newtheorem{corollary}[theorem]{Corollary}
\newtheorem{theoremx}{Theorem}
\newtheorem*{theoremb}{Theorem B}
\theoremstyle{definition}
\newtheorem{remark}[theorem]{Remark}
\newtheorem{notation}[theorem]{Notation}
\newtheorem{example}[theorem]{Example}
\newtheorem{definition}[theorem]{Definition}
\newtheorem{question}[theorem]{Question}
\DeclareMathOperator{\PGL}{PGL}
\DeclareMathOperator{\sign}{sign}
\DeclareMathOperator{\chara}{char}
\def\l@subsection{\@tocline{2}{0pt}{2.5pc}{5pc}{}}
\author{Alessio Caminata}
\address{{\small Dipartimento di Matematica, Universit\`a di Genova, via Dodecaneso 35, 16146, Genova, Italy}}
\email{caminata@dima.unige.it}
\author{Luca Schaffler}
\address{{\small Department of Mathematics, KTH Royal Institute of Technology, SE-100 44 Stockholm, Sweden}}
\email{lucsch@math.kth.se}
\title{A Pascal's Theorem for rational normal curves}
\begin{document}

\thanks{\textit{Mathematics Subject Classification (2020)}: 14A25, 14H50, 51N35. 
	\\ \indent \textit{Keywords and phrases:} Pascal's Theorem, rational normal curve, twisted cubic, Grassmann--Cayley algebra, bracket ring}

\maketitle

\begin{abstract}
Pascal's Theorem gives a synthetic geometric condition for six points $a,\ldots,f$ in $\mathbb{P}^2$ to lie on a conic. Namely, that the intersection points $\overline{ab}\cap\overline{de}$, $\overline{af}\cap\overline{dc}$, $\overline{ef}\cap\overline{bc}$ are aligned. One could ask an analogous question in higher dimension: is there a coordinate-free condition for $d+4$ points in $\mathbb{P}^d$ to lie on a degree $d$ rational normal curve? In this paper we find many of these conditions by writing in the Grassmann--Cayley algebra the defining equations of the parameter space of $d+4$ ordered points in $\mathbb{P}^d$ that lie on a rational normal curve. These equations were introduced and studied in a previous joint work of the authors with Giansiracusa and Moon. We conclude with an application in the case of seven points on a twisted cubic.
\end{abstract}


\section{Introduction}

Pascal's Theorem is a classic result in plane projective geometry.
It says that if six points $a,\dots,f$ in $\mathbb{P}^2$ lie on a conic then the three intersection points $\overline{ab}\cap\overline{de}$, $\overline{af}\cap\overline{dc}$, $\overline{ef}\cap\overline{bc}$ are aligned \cite{Pas40}.
Actually, this is a generalization of an even older result of Pappus, for which the same conclusion holds if instead of a conic we require three points to lie on a line and the other three on another line. 
Pappus's Theorem can be seen as the special case of Pascal's with a degenerate conic of two lines.
Pappus--Pascal Theorem is also known as the Mystic Hexagon Theorem, with reference to the hexagon with vertices the six points.

\par The converse of this result is also true and is due to Braikenridge and Maclaurin.
It states that if the three intersection points of the three pairs of lines through opposite sides of a hexagon lie on a line, then the six vertices of the hexagon lie on a (possibly degenerate) conic \cite{Bra33, Mac35}.
In the sequel, we will refer to these statements simply as Pascal's Theorem, meaning that the two implications hold, and the conic might be degenerate.

\par One of the strengths of Pascal's Theorem is that it converts a quadratic condition, the fact that six points lie on a conic, to a linear condition, namely asking for three points to be aligned.
Therefore it is natural to ask whether such a result could be generalized. In fact, many generalizations appear in the literature, and we will soon go back to them. 
Now, let us state the question  which is the main object of investigation of this paper.
\begin{question}\label{mainquestion}
	Is there a synthetic linear condition for $d+4$ points in $\mathbb{P}^d$ to lie on a degree $d$ rational normal curve?
\end{question}

We recall that a \emph{rational normal curve} in $\mathbb{P}^d$ is a smooth rational curve of degree $d$.
By Castelnuovo's Lemma, there is always a rational normal curve passing through $d+3$ points in general linear position in projective space $\mathbb{P}^d$.
For example, for $d=2$ we have that $5$ points always lie on a conic, and Pascal's Theorem gives a synthetic linear condition for $d+4=6$ points to lie on a conic.
For $d\geq3$, we are able to provide an answer to Question~\ref{mainquestion}, in the following form. We work over an algebraically closed field $\Bbbk$ of arbitrary characteristic, unless otherwise specified.

\begin{theoremx}[see Corollary \ref{corollary-main}]
\label{maintheoreminintro}
Let $P_1,\ldots,P_{d+4}\in\mathbb{P}^d$ be points in general linear position. Then $P_1,\ldots,P_{d+4}$ lie on a rational normal curve if and only if for every $I=\{i_1<\ldots<i_6\}\subseteq\{1,\dots,d+4\}$, $I^c=\{j_1<\ldots<j_{d-2}\}$, the following $d+1$ points lie on a hyperplane:
\begin{itemize}
\item The intersection of the line $P_{i_1}P_{i_2}$ with the hyperplane $P_{i_4}P_{i_5}P_{j_1}\ldots P_{j_{d-2}}$;
\item The intersection of the line $P_{i_2}P_{i_3}$ with the hyperplane $P_{i_5}P_{i_6}P_{j_1}\ldots P_{j_{d-2}}$;
\item The intersection of the line $P_{i_3}P_{i_4}$ with the hyperplane $P_{i_1}P_{i_6}P_{j_1}\ldots P_{j_{d-2}}$;
\item The points $P_{j_1},\ldots,P_{j_{d-2}}$.
\end{itemize}
\end{theoremx}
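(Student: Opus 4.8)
\emph{The plan.} I would argue one $6$-subset at a time and reduce the statement to the classical Pascal--Braikenridge--Maclaurin theorem in $\mathbb{P}^2$, by projecting away the ``passive'' points. Fix $I=\{i_1<\cdots<i_6\}$ with complement $I^c=\{j_1<\cdots<j_{d-2}\}$. The only input beyond linear algebra is the description, from the authors' earlier joint work with Giansiracusa and Moon, of the parameter space of $d+4$ ordered points on a rational normal curve in $\mathbb{P}^d$: a configuration in general linear position lies on such a curve if and only if, for every $6$-subset $I$, a certain explicit bracket polynomial $F_I$ vanishes, where $F_I$ is built out of the $(d+1)\times(d+1)$ brackets $[\,P_{i_a}\,P_{i_b}\,P_{i_c}\,P_{j_1}\cdots P_{j_{d-2}}\,]$ with $a,b,c\in\{1,\dots,6\}$; up to sign these brackets are the Pl\"ucker coordinates of the Gale-dual $d+4$ points in $\mathbb{P}^2$, and $F_I$ is exactly the classical equation for six of them to lie on a conic.

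\emph{Step 1: descend to $\mathbb{P}^2$.} Put $\Lambda=\langle P_{j_1},\dots,P_{j_{d-2}}\rangle$. General linear position makes $\Lambda$ a $(d-3)$-plane meeting none of the lines $\overline{P_{i_a}P_{i_b}}$, so the projection $\pi\colon\mathbb{P}^d\dashrightarrow\mathbb{P}^2$ away from $\Lambda$ is regular along all points and lines occurring below, and the images $\overline{P}_{i_1},\dots,\overline{P}_{i_6}$ are six points in general linear position in $\mathbb{P}^2$. Choosing coordinates with $\Lambda=\{x_0=x_1=x_2=0\}$ and expanding a determinant along the columns spanning $\Lambda$ gives
\[
\bigl[\,P_{i_a}\,P_{i_b}\,P_{i_c}\,P_{j_1}\cdots P_{j_{d-2}}\,\bigr]=\pm\,\delta\cdot\bigl[\,\overline{P}_{i_a}\,\overline{P}_{i_b}\,\overline{P}_{i_c}\,\bigr],
\]
with $\delta\neq 0$ independent of $a,b,c$. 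Since every monomial of $F_I$ has the same degree in the bracket variables, $F_I(P_1,\dots,P_{d+4})$ is a nonzero multiple of the conic equation evaluated at $\overline{P}_{i_1},\dots,\overline{P}_{i_6}$. Hence $P_1,\dots,P_{d+4}$ lie on a rational normal curve if and only if, for every $6$-subset $I$, the six points $\overline{P}_{i_1},\dots,\overline{P}_{i_6}$ lie on a (possibly degenerate) conic in $\mathbb{P}^2$.

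\emph{Step 2: Pascal and lifting.} Fix $I$. By Pascal's theorem and its converse, applied to the hexagon with ordered vertices $\overline{P}_{i_1},\dots,\overline{P}_{i_6}$, these six points lie on a conic if and only if the three points $\overline{P}_{i_1}\overline{P}_{i_2}\cap\overline{P}_{i_4}\overline{P}_{i_5}$, $\overline{P}_{i_2}\overline{P}_{i_3}\cap\overline{P}_{i_5}\overline{P}_{i_6}$, $\overline{P}_{i_3}\overline{P}_{i_4}\cap\overline{P}_{i_6}\overline{P}_{i_1}$ are collinear. I then lift this along $\pi$. The hyperplane $P_{i_4}P_{i_5}P_{j_1}\cdots P_{j_{d-2}}$ is $\langle P_{i_4},P_{i_5}\rangle+\Lambda$, the $\pi$-preimage of the line $\overline{P}_{i_4}\overline{P}_{i_5}$; by general position the line $\overline{P_{i_1}P_{i_2}}$ meets it in a single point $q_{12}\notin\Lambda$, with $\pi(q_{12})=\overline{P}_{i_1}\overline{P}_{i_2}\cap\overline{P}_{i_4}\overline{P}_{i_5}$, and likewise for $q_{23},q_{34}$. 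Finally, $\langle q_{12},q_{23},q_{34}\rangle+\Lambda$ has projective dimension $(d-2)+\dim\langle\pi(q_{12}),\pi(q_{23}),\pi(q_{34})\rangle$, so the $d+1$ points $q_{12},q_{23},q_{34},P_{j_1},\dots,P_{j_{d-2}}$ lie on a hyperplane precisely when $\pi(q_{12}),\pi(q_{23}),\pi(q_{34})$ are collinear. Concatenating the equivalences of Steps 1 and 2 over all $6$-subsets $I$ proves the theorem.

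\emph{Main obstacle, and the Grassmann--Cayley route.} The geometry of Step 2 is routine; the real work is Step 1: extracting from the earlier paper exactly the equivalence ``on a rational normal curve $\iff$ every projected sextuple lies on a conic'', and verifying the general-position reductions (that $\Lambda$ is a genuine $(d-3)$-plane, that $\delta\neq 0$, that no three of the $\overline{P}_{i_k}$ are collinear so $\mathbb{P}^2$-Pascal applies, and that degenerate conics must be allowed --- hence the Braikenridge--Maclaurin form). The alternative advertised in the abstract bypasses the projection: set $m_1=(P_{i_1}\vee P_{i_2})\wedge(P_{i_4}\vee P_{i_5}\vee P_{j_1}\vee\cdots\vee P_{j_{d-2}})$ and, cyclically, $m_2=(P_{i_2}\vee P_{i_3})\wedge(P_{i_5}\vee P_{i_6}\vee P_{j_1}\vee\cdots\vee P_{j_{d-2}})$, $m_3=(P_{i_3}\vee P_{i_4})\wedge(P_{i_6}\vee P_{i_1}\vee P_{j_1}\vee\cdots\vee P_{j_{d-2}})$ in the Grassmann--Cayley algebra, and show via the shuffle/straightening formulas that
\[
\bigl[\,m_1\vee m_2\vee m_3\vee P_{j_1}\vee\cdots\vee P_{j_{d-2}}\,\bigr]
\]
equals $F_I$ modulo the Grassmann--Pl\"ucker relations. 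The joins and meets here encode precisely the four bullets, so the vanishing of this $(d+1)$-fold bracket is the incidence condition in the statement; in this route the obstacle is the bracket bookkeeping of the Cayley factorization. For $d=2$ both routes collapse to the textbook proof of Pascal's theorem.
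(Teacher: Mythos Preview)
Your argument is correct and takes a more directly geometric route than the paper's. Both proofs start from the same input—Proposition~\ref{prop:newequationsWdn}, that for points not on a hyperplane membership in $V_{d,d+4}$ is cut out by the lifted conic equations $(\eta_{j_{d-2}}\circ\cdots\circ\eta_{j_1})(\phi_I)$ (your $F_I$)—but diverge after that. The paper stays inside the bracket ring: it expands the Grassmann--Cayley expression, recognizes the result as the $\eta$-lift of the expanded $\mathbb{P}^2$ Pascal polynomial, and then invokes Lemma~\ref{lemma:liftingsyzygies} (lifts of van der Waerden syzygies are again syzygies) to transport the straightening from $\mathcal{B}_{I,3}$ up to $\mathcal{B}_{[d+4],d+1}$, thereby matching $F_I$. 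You instead project from $\Lambda=\langle P_{j_1},\dots,P_{j_{d-2}}\rangle$: the block-determinant identity in your Step~1 is precisely the geometric incarnation of the algebraic lift $\eta_j$, and the dimension count in Step~2 replaces the syzygy bookkeeping by classical Pascal in the quotient $\mathbb{P}^2$. What the paper's algebraic route buys is that the Grassmann--Cayley equality in Theorem~\ref{theorem-main} is a polynomial identity with no genericity needed, so one also gets the quasi-Veronese version of Corollary~\ref{corollary-main} for points merely not on a hyperplane; your projection genuinely needs general linear position (so that $\Lambda$ is a $(d{-}3)$-plane missing every line $\overline{P_{i_a}P_{i_b}}$ and the six image points are in general position in $\mathbb{P}^2$), which is exactly the hypothesis of Theorem~\ref{maintheoreminintro}. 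Your closing paragraph accurately sketches the paper's own Grassmann--Cayley approach.
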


As Pascal's Theorem is true also for degenerate conics, i.e. two lines, a more general form of Theorem \ref{maintheoreminintro} holds for appropriate degenerations of rational normal curves, the \emph{quasi-Veronese curves}. We refer to \S \ref{equationsforpointsonarationalnormalcurve} for the definition and examples of quasi-Veronese curves, and to Corollary \ref{corollary-main} for the precise statement of the above result.


\subsection{Methods employed}
Our main tool for the proof of Theorem \ref{maintheoreminintro} is the Grassmann--Cayley algebra.
The Grassmann--Cayley algebra of a given finite dimensional $\Bbbk$-vector space, is nothing else than the exterior algebra of the vector space together with two operations: the join denoted by $\vee$, which is just the standard wedge product, and the meet, which is denoted by $\wedge$.
The reason for this apparently strange change of notations is geometric. In fact, equations in the Grassmann--Cayley algebra of a vector space $V$ can be used to represent linear dependence among linear subspaces of the projective space $\mathbb{P}(V)$, where the join corresponds to the sum of linear spaces and the meet to the intersection.
For instance, the collinearity of the three points $\overline{ab}\cap\overline{de}$, $\overline{af}\cap\overline{dc}$, $\overline{ef}\cap\overline{bc}$ in Pascal's Theorem, can be rewritten in the Grassmann--Cayley algebra as follows:
\[
((a\vee b)\wedge(d\vee e))\vee((a\vee f)\wedge(d\vee c))\vee((e\vee f)\wedge(b\vee c))=0.
\]
By introducing coordinates $a=(a_1,a_2,a_3)$ etc. for each point, one can expand the previous expression to obtain a multihomogeneous equation in the coordinates of the points in $\mathbb{P}^2$.
By using appropriate syzygies, this can be written as the following algebraic combination of determinants in the form $[abc]=\det(abc)$, etc:
\begin{equation}\label{eq:V26intro} 
[abc][ade][bdf][cef]-[abd][ace][bcf][def]=0.
\end{equation}
It is classical and well-known that this equation is equivalent to requiring that $a,\dots,f$ lie on a (possibly degenerate) conic (cf.,~(\cite[p.118]{Cob61} and \cite[Example~3.4.3]{Stu08}).
Thus, one obtains a Grassmann--Cayley algebra proof of Pascal's Theorem.

\par We would like to mimic the same story in higher dimension.
Let $d\geq2$ be an integer, and consider the parameter space of $d+4$ points in $\mathbb{P}^d$ supported on a rational normal curve. 
More precisely, we define the variety $V_{d,d+4}\subseteq(\mathbb{P}^d)^{d+4}$ as the Zariski closure of the subset of $(d+4)$-tuples of points in $\mathbb{P}^d$ that lie on a rational normal curve.
For example, $V_{2,6}$ is simply the hypersurface in $(\mathbb{P}^2)^6$ defined by equation~\eqref{eq:V26intro}.
More generally, in a previous joint work with Giansiracusa and Moon \cite{CGMS18}, we were able to provide multihomogeneous equations that cut out set-theoretically $V_{d,d+4}$ union with the the locus of  $(d+4)$-point configurations in $\mathbb{P}^d$ supported on a hyperplane (see \S\ref{equationsforpointsonarationalnormalcurve} for precise definitions and more details).
As for the two-dimensional situation, these equations can be written as algebraic combinations of determinants.
So one may try to convert them into Grassmann--Cayley algebra expressions in order to obtain a synthetic geometric statement in the spirit of Pascal's Theorem.
Unfortunately, while passing from Grassmann--Cayley algebra expressions to multihomogeneous equations is always possible and requires only tedious, but straightforward computations, the other direction is in general highly non-trivial, and not even always possible.
Determining whether a given expression can be written in the Grassmann--Cayley algebra, and, if possible, determining such expression, is called the \emph{Cayley Factorization Problem} (\cite{SW91}, \cite{Whi91}, \cite[\S3.5]{Stu08}, \cite[\S4.5]{ST19}). This is a hard problem, and no general algorithm is known.
We remark that an important partial result is given by N. White \cite{Whi91}, who provides an algorithm for the multilinear case (i.e. each point occurs exactly once in the monomials).
However, the equations we have in our case are not multilinear, therefore we cannot take advantage of White's algorithm.

\par To solve this problem, we introduce a technique to lift syzygies from the two-dimensional to the $d$-dimensional situation (see \S\ref{liftvanderwaerdensyzygies} for details).
Using this technique, we are able to rewrite the equations for $V_{d,d+4}$ in the Grassmann--Cayley algebra, obtaining the coordinate-free description claimed in Theorem~\ref{maintheoreminintro}. More precisely, we prove the following.

\begin{theoremx}\label{maintheoremintro2}
	Let $d\geq3$, let $P_1,\ldots,P_{d+4}$ be points in $\mathbb{P}^d$ not on a hyperplane. Then the following are equivalent:
	\begin{enumerate}[(i)]
		\item $(P_1,\ldots,P_{d+4})\in V_{d,d+4}$ (equivalently, they lie on a quasi-Veronese curve);
		\item For every $I=\{i_1<\ldots<i_6\}\subseteq\{1,\dots,d+4\}$, $I^c=\{j_1<\ldots<j_{d-2}\}$ the following equality in the Grassmann--Cayley algebra holds:
		\begin{gather*}
		(P_{i_1}P_{i_2}\wedge P_{i_4}P_{i_5}P_{j_1}\ldots P_{j_{d-2}})\vee(P_{i_2}P_{i_3}\wedge P_{i_5}P_{i_6}P_{j_1}\ldots P_{j_{d-2}})\\
		\vee(P_{i_3}P_{i_4}\wedge P_{i_6}P_{i_1}P_{j_1}\ldots P_{j_{d-2}})\vee P_{j_1}\ldots P_{j_{d-2}}=0.
		\end{gather*}
	\end{enumerate}
\end{theoremx}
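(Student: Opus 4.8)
The plan is, for each admissible index set $I$, to match the Grassmann--Cayley equation in (ii) with the corresponding member of the explicit system of multihomogeneous equations for $V_{d,d+4}$ recalled in \S\ref{equationsforpointsonarationalnormalcurve}, by lifting the classical Cayley factorization behind Pascal's Theorem from $\mathbb{P}^2$ to $\mathbb{P}^d$ via the technique of \S\ref{liftvanderwaerdensyzygies}. First I would invoke \cite{CGMS18} (in the form restated in \S\ref{equationsforpointsonarationalnormalcurve}): the union of $V_{d,d+4}$ with the locus of configurations contained in a hyperplane is cut out set-theoretically by polynomials $V_I$, indexed by the $6$-subsets $I=\{i_1<\cdots<i_6\}\subseteq\{1,\dots,d+4\}$, where $V_I$ is obtained from~\eqref{eq:V26intro} written for the six points $P_{i_1},\dots,P_{i_6}$ by appending the complementary points $P_{j_1},\dots,P_{j_{d-2}}$ (with $I^c=\{j_1<\cdots<j_{d-2}\}$) to every bracket. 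Since $P_1,\dots,P_{d+4}$ are not contained in a hyperplane, $(P_1,\dots,P_{d+4})\in V_{d,d+4}$ --- equivalently, they lie on a quasi-Veronese curve, by the description of $V_{d,d+4}$ in \S\ref{equationsforpointsonarationalnormalcurve} --- is equivalent to the simultaneous vanishing of all the $V_I$. It therefore suffices to prove that, for each fixed $I$, the equation in (ii) is equivalent to $V_I=0$.

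Next I would expand the Grassmann--Cayley expression attached to $I$ into a bracket polynomial. Set $J=P_{j_1}\cdots P_{j_{d-2}}$. Applying the meet (shuffle) formula in $\mathbb{P}^d$, each of the three meets is a point; for example
\[
P_{i_1}P_{i_2}\wedge P_{i_4}P_{i_5}J=[P_{i_1}P_{i_4}P_{i_5}J]\,P_{i_2}-[P_{i_2}P_{i_4}P_{i_5}J]\,P_{i_1},
\]
and analogously for the other two meets. Taking the join of the three resulting points with $J$ and expanding by multilinearity, every monomial in which an index from $I$ occurs twice vanishes, and one is left with an explicit alternating sum $G_I$ of four products of $(d+1)$-brackets. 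Hence the equation in (ii) for the index $I$ is exactly $G_I=0$, and it remains to show that $G_I$ and $V_I$ coincide up to sign as functions on the space of $(d+4)$-point configurations (equivalently, in the bracket ring).

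This last identity is the core of the argument, and the step I expect to be the main obstacle. For $d=2$ (so $J$ empty) it is precisely the classical Cayley factorization underlying Pascal's Theorem recalled in the introduction: the four-term expansion $G_I$ equals $\pm$ the two-term polynomial~\eqref{eq:V26intro}, and the passage from one to the other is a chain of van der Waerden syzygies among $3\times 3$ brackets. The Lifting Lemma of \S\ref{liftvanderwaerdensyzygies} turns each such syzygy into the corresponding identity among $(d+1)\times(d+1)$ brackets obtained by adjoining the fixed points of $J$; running the same chain of syzygies therefore transforms the expansion of the Grassmann--Cayley expression over $\mathbb{P}^d$ into $\pm V_I$. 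The delicate points to be checked are that the $J$-entries produce no extra terms in the meet and join expansions (they cannot, since this would repeat a point of $J$), that every syzygy used in the $d=2$ factorization is genuinely of the liftable van der Waerden type handled by the Lifting Lemma, and that the ordering conventions $i_1<\cdots<i_6$ and $j_1<\cdots<j_{d-2}$ are tracked carefully so that the overall sign is nonzero. Granting this, for every $I$ the equation in (ii) holds if and only if $G_I=0$, if and only if $V_I=0$; hence (ii) holds for all $I$ if and only if all $V_I$ vanish, which is equivalent to (i).
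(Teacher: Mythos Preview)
Your proposal is correct and follows essentially the same approach as the paper's proof of Theorem~\ref{theorem-main}: expand each Grassmann--Cayley expression via the meet formula to a four-term bracket polynomial, recognize it as the lift under $\eta_{j_{d-2}}\circ\cdots\circ\eta_{j_1}$ of the four-term Pascal expansion~\eqref{eq:pascal1}, and then invoke the Lifting Lemma (Corollary~\ref{cor-liftingequations}) to carry the $d=2$ straightening to the two-term lifted equation $(\eta_{j_{d-2}}\circ\cdots\circ\eta_{j_1})(\phi_I)$, which by Proposition~\ref{prop:newequationsWdn} cuts out $W_{d,d+4}$. Your one hesitation---whether every syzygy used is ``liftable''---is a non-issue, since Lemma~\ref{lemma:liftingsyzygies} shows $\eta_j$ sends the entire syzygy ideal into the next one, so any equality in $\mathcal{B}_{I,3}$ lifts automatically.
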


From the geometric interpretation of the Grassmann--Cayley algebra expression in Theorem~\ref{maintheoremintro2}, one obtains immediately Theorem~\ref{maintheoreminintro}, which is the claimed generalization of Pascal's Theorem. We remark that in Theorem~\ref{equivalentformulations} we find many equivalent ways of rewriting the Grassmann--Cayley expression in Theorem~\ref{maintheoremintro2} (ii), and these provide different, but equivalent, reformulations of Theorem~\ref{maintheoreminintro}. Finally, by dualizing one of the implications of Theorem~\ref{maintheoreminintro}, we obtain a generalization of Brianchon's Theorem to rational normal curves (see Corollary~\ref{generalizedbrianchon'stheorem}).


\subsection{Historical context}
In the literature, Pascal's Theorem was generalized in many different directions. This gave rise to a great abundance of results in projective geometry, which we briefly survey.

In \cite{Mob48} M\"obius proved the following. Assume a polygon with $4n+2$ sides is inscribed in an irreducible conic. Determine $2n+1$ points by extending opposite edges until they meet. If $2n$ of these $2n+1$ points of intersection lie on a line, then the last point also lies on the line. The case $n=1$ recovers Pascal's Theorem. The classical theorem of Chasles \cite{Cha85} (stating that if we have two planar cubics meeting at nine points and a third cubic passes through eight of the nine points, then the third cubic also passes through the ninth) implies Pascal's Theorem if we consider reducible cubics. Chasles's Theorem was generalized by Cayley \cite{Cay43} and Bacharach \cite{Bac86} to planar curves of arbitrary degrees. See \cite{EGH96} for a detailed survey about these results and further developments in this direction.

In \cite{Jam30}, James fixes five of the six points on a conic in Pascal's Theorem, and allows the sixth one to move away from the conic. The object of investigation is to determine the loci of the varying point when certain restrictions have been placed upon the triangle formed by the intersections of opposite sides of the hexagon. Beniamino Segre proved results about lines in $\mathbb{P}^3$ and $\mathbb{P}^4$ in the spirit of Pascal's Theorem. For instance, \cite{Seg45} gives a necessary and sufficient condition for a double-four in $\mathbb{P}^3$ to lie on a cubic surface, and this boils down to the linear dependence of certain point configurations on the lines (a double-four consists of two sets of four skew lines $a_1,\ldots,a_4$ and $b_1,\ldots,b_4$ such that $a_i$ and $b_i$ are skew and $a_i\cap b_j\neq\emptyset$ for all $i\neq j$).

More recently, Borodzik and \.Zo\l\k adek generalized Pascal's Theorem to the case of a general planar cubic and for rational planar cubics. For the precise statement we refer to \cite[Theorem 4.4 and Theorem 5.1]{BZ02}. Finally,  a simplified version of the main result of \cite{Tra13} says that if two sets of $k$ lines meet in $k^2$ distinct points, and if $dk$ of those points lie on an irreducible curve of degree $d$, then the remaining $k(k-d)$ points lie on a unique curve of degree $k-d$ (the case $k=3$ and $d=2$ recovers Pascal's Theorem).

As it appears from the above discussion, we were not able to find in the literature a generalization of Pascal's Theorem considering higher degree rational normal curves, which instead is the case of interest in the current paper. 


\subsection{Organization of the paper}

We now outline the structure of the paper. In Section \ref{preliminaries} we collect some preliminary results divided into two parts. The first part (\S\ref{thebracketring}, \S\ref{grassmann-cayleyalgebra}) briefly reviews definitions and main results about the Grassmann--Cayley algebra and its geometric interpretation. In the second part (\S\ref{equationsforpointsonarationalnormalcurve}) we consider the parameter space $V_{d,d+4}$ and its defining equations.
Sections \ref{liftvanderwaerdensyzygies} and \ref{alternativedescriptionequationswithlifts} are of technical nature: in \S\ref{liftvanderwaerdensyzygies} we introduce a technique to lift van der Waerden syzygies of multihomogeneous polynomials from the plane situation to higher dimension (Definition~\ref{def:lift}), and in \S\ref{alternativedescriptionequationswithlifts} we rewrite the equations of $V_{d,d+4}$ in a way that is compatible with these lifts. These results are then used in the proof of the main theorem, which is contained in \S\ref{sectionwithmainresult}. 
Finally, in \S\ref{applicationofresults} we combine our result with a 100-years-old theorem of H. White \cite{Whi15} to study the geometry of seven points on a twisted cubic.

\section*{acknowledgements}
We would like to thank Noah Giansiracusa, Han-Bom Moon, Jessica Sidman, and Will Traves for their interest in this project. In particular, we are grateful to Jessica Sidman for raising the question that is object of this paper. We also thank Alessandro Oneto for his suggestions. Finally, we thank the anonymous referee for the valuable comments and feedback. Figure~\ref{picture7pointswithtwistedcubic} was realized using the software GeoGebra, Copyright \copyright~International GeoGebra Institute, 2013. The first author was supported by the European Union's Horizon~2020 research and innovation programme under grant agreement No.~701807.


\section{Preliminaries}
\label{preliminaries}
In this section, we present preliminary material that we rely on in the rest of the paper. For the reader's convenience, in \S\ref{thebracketring} and \S\ref{grassmann-cayleyalgebra} we briefly survey the main definitions and classic facts on the bracket ring and Grassmann--Cayley algebra. A more detailed exposition and proofs can be found in \cite[Chapter 3]{Stu08} and \cite{BBR85,SW89}. See also the more recent \cite{ST19}.
Then, in \S\ref{equationsforpointsonarationalnormalcurve} we recall the main results on the equations of the variety $V_{d,d+4}$ from \cite{CGMS18} (for further applications of these equations in tropical geometry see \cite{CGMS20}).


\subsection{The bracket ring}
\label{thebracketring}

Let $\Bbbk$ be an algebraically closed field. Let $J\subseteq\mathbb{N}$ be a finite index set, and let $d\leq|J|$ be a positive integer. A \emph{bracket} is a formal expression $[\lambda_1\ldots\lambda_d]$ where $\lambda_1<\ldots<\lambda_d$, $\lambda_1,\dots,\lambda_d\in J$. Denote by $\Lambda(J,d)$ the set of all such brackets. If $J=\{1,\dots,n\}$, then we simply denote this set by $\Lambda(n,d)$. Define $\Bbbk[\Lambda(J,d)]$ to be the polynomial ring generated by the elements in $\Lambda(J,d)$. If $\pi\in S_d$ is any permutation, it is useful to define $[\lambda_{\pi(1)}\ldots\lambda_{\pi(d)}]=\sign(\pi)[\lambda_1\ldots\lambda_d]$. Moreover, if $\lambda_i=\lambda_j$ for some $i\neq j$, we set $[\lambda_1\ldots\lambda_d]=0$.

\par The \emph{generic coordinatization} is the algebra homomorphism
\[
\phi_{J,d}\colon\Bbbk[\Lambda(J,d)]\rightarrow\Bbbk[x_{ij}\mid 1\leq i\leq d, \ j\in J],
\]
defined by extending $[\lambda_1\ldots\lambda_d]\mapsto\det(x_{i\lambda_j})$. Denote by $\mathcal{I}_{J,d}$ the kernel of $\phi_{J,d}$. The elements of $\mathcal{I}_{J,d}$ are called \emph{syzygies} and the image of $\phi_{J,d}$ is called \emph{bracket ring}, which we denote by $\mathcal{B}_{J,d}$.
The generic coordinatization gives an identification $\mathcal{B}_{J,d}\cong\Bbbk[\Lambda(J,d)]/\mathcal{I}_{J,d}$. Therefore, by abuse of notation, we will often identify a formal bracket $[\lambda]$ with its associated determinant $\phi_{J,d}([\lambda])$.

\begin{definition}
Let $s\in\{1,\ldots,d\},\alpha\in\Lambda(J,s-1),\beta\in\Lambda(J,d+1)$, and $\gamma\in\Lambda(J,d-s)$. The \emph{van der Waerden} syzygy $[[\alpha\beta^\bullet\gamma]]$ is defined to be the following quadratic polynomial in $\Bbbk[\Lambda(J,d)]$:
\[
[[\alpha\beta^\bullet\gamma]]=\sum_{\tau\in\Lambda(d+1,s)}\sign(\tau,\tau^*)[\alpha\beta_{\tau_1^*}\ldots\beta_{\tau_{d+1-s}^*}][\beta_{\tau_1}\ldots\beta_{\tau_s}\gamma].
\]
\end{definition}
Let us clarify the notation introduced: for a bracket $\tau\in\Lambda(d+1,s)$, we let $\tau^*\in\Lambda(d+1,d+1-s)$ be the unique bracket consisting of the indices $\{1,\ldots,n\}\setminus\{\tau_1,\ldots,\tau_s\}$. By $\sign(\tau,\tau^*)$ we mean the sign of the permutation sending $\tau_1,\ldots,\tau_s$ to the first $s$ indices and $\tau_1^*,\ldots,\tau_{d+1-s}^*$ to the last $d+1-s$ indices. 
Computing the generic coordinatization of $[[\alpha\beta^\bullet\gamma]]$, one sees that $[[\alpha\beta^\bullet\gamma]]\in \mathcal{I}_{J,d}$.
Actually even more is true. In fact, a subset of the van der Waerden syzygies, the so-called \emph{straightening syzygies}, is a Gr\"obner basis of the ideal $\mathcal{I}_{J,d}$ with respect to a suitable term order (see \cite[Theorem 5.1]{SW89} and references there for previous related results). In particular, the van der Waerden syzygies generate $\mathcal{I}_{J,d}$.

\begin{example}
\label{firstexampleofvanderwaerdensyzygy}
We fix $d=3, J=\{1,\dots,6\}, s=2,\alpha=[1],\beta=[1346]$, and $\gamma=[5]$. Then, we obtain the van der Waerden syzygy
\begin{align*}
[[\alpha\beta^\bullet\gamma]]=[146][135]-[136][145]+[134][165]=[146][135]-[136][145]-[134][156].
\end{align*}
\end{example}

We conclude by recalling the following important result. Given a polynomial class $\overline{F}$ in the bracket ring $\mathcal{B}_{J,d}$, one would like to find a representative for $\overline{F}$ in ``standard form''. More precisely, consider brackets $[\lambda^1],\ldots,[\lambda^k]\in\Lambda(J,d)$. The monomial $[\lambda^1]\cdot\ldots\cdot[\lambda^k]\in\Bbbk[\Lambda(J,d)]$ is called \emph{standard} if $\lambda_i^1\leq\ldots\leq\lambda_i^k$ for all $i\in\{1,\ldots,d\}$. It turns out that the standard monomials in $\Bbbk[\Lambda(J,d)]$ form a $\Bbbk$-vector space basis for the bracket ring $\mathcal{B}_{J,d}$ (\cite[Theorem~3.1]{SW89}). Therefore, using appropriate syzygies, we can choose a representative for the class $\overline{F}\in\mathcal{B}_{J,d}$ whose monomials are in standard form. This is the so-called \emph{straightening algorithm}.


\subsection{The Grassmann--Cayley algebra}
\label{grassmann-cayleyalgebra}

Let $V$ be a $d$-dimensional $\Bbbk$-vector space. Given two vectors $v,w\in V$ the \emph{join} of $v$ and $w$, denoted by $v\vee w$, is the wedge of the two vectors in the exterior algebra $\Lambda(V)$ (this convention is adopted for geometric reasons). Often, to simplify our notation, we denote $v\vee w$ simply by $vw$. If $v_1,\ldots,v_k\in V$, then $v_1\ldots v_k$ is called an \emph{extensor of step $k$}. Let $e_1,\ldots,e_d$ be a fixed basis of $V$. If we identify $e_1\ldots e_d$ with $1$, then $v_1\ldots v_d$ equals the determinant of the matrix of coordinates with respect to the chosen basis. We denote such determinant by $[v_1\ldots v_d]$, which we still consider an extensor of step $d$.

\par Given two extensors $a_1\ldots a_j$ and $b_1\ldots b_k$ with $j+k\geq d$, we define their \emph{meet} as the following element of $\Lambda^{j+k-d}(V)$:
\begin{equation}\label{eq-meet}
(a_1\ldots a_j)\wedge(b_1\ldots b_k)=\sum_\sigma\sign(\sigma)[a_{\sigma(1)}\ldots a_{\sigma(d-k)}b_1\ldots b_k]a_{\sigma(d-k+1)}\ldots a_{\sigma(j)},
\end{equation}
where the sum is taken over all permutations $\sigma$ of $\{1,\ldots,j\}$ such that $\sigma(1)<\ldots<\sigma(d-k)$ and $\sigma(d-k+1)<\ldots<\sigma(j)$. The meet operation is associative and satisfies $A\wedge B=(-1)^{(d-j)(d-k)}B\wedge A$.

\par The \emph{Grassmann--Cayley algebra} is the vector space $\Lambda(V)$ together with the operations $\vee$ and $\wedge$ extended by distributivity. An expression in the Grassmann--Cayley algebra is called \emph{simple} if it is obtained by combining vectors in $V$ only using meet and join operations, not addition (see \cite[Example~3.3.3]{Stu08}). For instance, if $V$ is $3$-dimensional and $v_1,v_2,v_3\in V$, then $(v_1v_2)\wedge(v_1v_3)\wedge(v_2v_3)$ is a simple expression. Note that $(v_1v_2)\wedge(v_1v_3)\wedge(v_2v_3)$ is of step $0$, i.e. an element of $\Lambda^0(V)$.

\begin{remark} We point out that each simple expression of step $0$ in the Grassmann--Cayley algebra can be expanded giving an element of the bracket ring using \eqref{eq-meet}. On the other hand, not every homogeneous bracket polynomial can be obtained by expanding a Grassmann--Cayley algebra expression. Understanding whether this is possible is the so-called \emph{Cayley Factorization Problem} (\cite{SW91}, \cite{Whi91}, \cite[\S3.5]{Stu08}, \cite[\S4.5]{ST19}).
\end{remark}

\par The following argument gives a geometric interpretation of the elements of the Grassmann--Cayley algebra and the join and meet operations.
Let $A=a_1\ldots a_j$ be a non-zero extensor of step $j$. 
Let $\overline{A}$ be the be the $j$-dimensional vector subspace of $V$ generated by $a_1,\ldots,a_j$.
Observe that $\overline{A}$ is uniquely determined by $A$ and is independent of the representation chosen since $\overline{A}=\{v\in V: \ A\vee v=0\}$.
Conversely, each $j$-dimensional vector subspace $W\subseteq V$ uniquely determines, up to scalar multiplication, an extensor of step $j$: if $w_1,\ldots,w_j$ is a basis of $W$, then consider $w_1\ldots w_j$.
\par Keeping this interpretation in mind, the algebraic join of two extensors $A$ and $B$ corresponds to the linear span of the linear subspaces $\overline{A}$ and $\overline{B}$.
Similarly, the meet of $A$ and $B$ corresponds to the intersection of $\overline{A}$ and $\overline{B}$.
More precisely, we have the following result.

\begin{proposition}[Geometric interpretation, {\cite[Proposition~3.5 and Proposition~4.3]{BBR85}}]
\label{geometrictranslation}
Let $V$ be a $\Bbbk$-vector space of dimension $d$. Let $A=a_1\vee\ldots\vee a_j$ and $B=b_1\vee\ldots\vee b_k$ be two extensors of steps $j$ and $k$ respectively. Then
\begin{itemize}
		\item $A\vee B\neq0$ if and only if $a_1,\dots,a_j,b_1,\dots,b_k$ are linearly independent. In this case $\overline{A}+\overline{B}=\overline{A\vee B}=\mathrm{span}\{a_1,\dots,a_j,b_1,\dots,b_k\}$.
		\item Assume $j+k\geq d$. Then $A\wedge B\neq0$ if and only if $\overline{A}+\overline{B}=V$. In this case, $\overline{A}\cap\overline{B}=\overline{A\wedge B}$. In particular, $A\wedge B$ can be represented by an appropriate extensor.
\end{itemize}
\end{proposition}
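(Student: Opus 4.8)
The plan is to prove the two bullet points of Proposition~\ref{geometrictranslation} separately, since they are essentially standard facts about the exterior algebra dressed up in the join/meet notation.

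For the first bullet point, recall that $A \vee B = a_1 \wedge \cdots \wedge a_j \wedge b_1 \wedge \cdots \wedge b_k$ in the ordinary exterior algebra $\Lambda(V)$. A basic fact about exterior powers is that a wedge of vectors $v_1 \wedge \cdots \wedge v_m$ is nonzero if and only if $v_1, \ldots, v_m$ are linearly independent; this is immediate by completing an independent set to a basis (so the wedge is a basis vector of $\Lambda^m V$) and, conversely, by using a linear relation to rewrite the wedge as a sum of terms each containing a repeated factor. So $A \vee B \neq 0$ exactly when $a_1, \ldots, a_j, b_1, \ldots, b_k$ are linearly independent. In that case $a_1, \ldots, a_j, b_1, \ldots, b_k$ is a basis of $\mathrm{span}\{a_1, \ldots, a_j, b_1, \ldots, b_k\} = \overline{A} + \overline{B}$, and by the very definition of the overline operation ($\overline{A \vee B} = \{v : (A \vee B) \vee v = 0\}$) together with the independence characterization just recalled, $\overline{A \vee B}$ is precisely this span. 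This gives $\overline{A} + \overline{B} = \overline{A \vee B}$.

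For the second bullet point, assume $j + k \geq d$ and use the defining formula \eqref{eq-meet}. First I would show $A \wedge B \neq 0 \iff \overline{A} + \overline{B} = V$. For the easy direction: if $\overline{A} + \overline{B} \neq V$, pick a hyperplane $H$ containing both $\overline{A}$ and $\overline{B}$; then every bracket appearing in \eqref{eq-meet} has all of its $d$ entries in $H$, hence vanishes, so $A \wedge B = 0$. For the converse, suppose $\overline{A} + \overline{B} = V$. Then one can choose a basis $e_1, \ldots, e_d$ adapted to the pair: let $e_1, \ldots, e_{d-k}$ be vectors of $\overline{A}$ that together with $b_1, \ldots, b_k$ span $V$ — possible since $\overline{A} + \overline{B} = V$ and $\dim \overline{B} = k$ — and complete $e_1, \ldots, e_{d-k}$ inside $\overline{A}$ to a basis $e_1, \ldots, e_j$ of $\overline{A}$. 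Expressing $a_1, \ldots, a_j$ in this basis and using multilinearity and alternating-ness of both the bracket and the trailing extensor in \eqref{eq-meet}, the sum collapses (up to a nonzero scalar, the determinant of the change-of-basis matrix) to the single term $[e_1 \cdots e_{d-k} b_1 \cdots b_k]\, e_{d-k+1} \cdots e_j$, and the bracket here is nonzero because $e_1, \ldots, e_{d-k}, b_1, \ldots, b_k$ is a basis of $V$; hence $A \wedge B \neq 0$. Finally, to identify $\overline{A \wedge B}$ with $\overline{A} \cap \overline{B}$, note that $\dim(\overline{A} \cap \overline{B}) = j + k - d$, which matches the step of $A \wedge B$; using the adapted basis above one checks directly that $A \wedge B$ is a scalar multiple of $e_{d-k+1} \cdots e_j$, and these vectors lie in $\overline{A}$ by construction and in $\overline{B}$ because, having arranged $e_1, \ldots, e_{d-k}, b_1, \ldots, b_k$ to be a basis of $V$, the vectors $e_{d-k+1}, \ldots, e_j \in \overline{A}$ must already lie in $\mathrm{span}\{b_1, \ldots, b_k\} = \overline{B}$ (otherwise $\overline{A} + \overline{B}$ would have dimension exceeding $d$). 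Since $A \wedge B$ is represented by a nonzero extensor spanning a subspace of $\overline{A} \cap \overline{B}$ of the correct dimension $j + k - d$, the two subspaces coincide, which also proves the last sentence of the statement.

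The main obstacle is the converse direction of the second bullet, specifically showing the sum \eqref{eq-meet} does not vanish when $\overline{A} + \overline{B} = V$: the individual terms involve different trailing extensors and could in principle cancel, so one genuinely needs the adapted-basis reduction to see that everything funnels into one surviving term. Once the right basis is chosen the computation is routine bookkeeping with signs, but choosing the basis so that simultaneously (a) $e_1, \ldots, e_{d-k}, b_1, \ldots, b_k$ is a basis of $V$ and (b) $e_1, \ldots, e_j$ is a basis of $\overline{A}$ is the crux. Everything else reduces to the elementary linear-algebra fact relating independence to non-vanishing of wedge products, which I would state once at the start and reuse.
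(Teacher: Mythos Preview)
The paper does not actually prove Proposition~\ref{geometrictranslation}; it is stated as a preliminary fact with the reader referred to \cite[Chapter~3]{Stu08} for the proof. So there is no paper argument to compare against, and your write-up would serve as an added proof rather than a replacement.

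Your strategy is the standard one and is essentially correct, but there is a genuine gap in the execution of the second bullet. After passing from the $a_i$ to the adapted basis $e_1,\ldots,e_j$ of $\overline{A}$, the meet $(e_1\cdots e_j)\wedge B$ is still a sum of $\binom{j}{d-k}$ terms, and those other terms vanish only if every $e_i$ with $i>d-k$ lies in $\overline{B}$. Your justification for this (``otherwise $\overline{A}+\overline{B}$ would have dimension exceeding $d$'') is not valid: with an \emph{arbitrary} completion of $e_1,\ldots,e_{d-k}$ inside $\overline{A}$, the vectors $e_{d-k+1},\ldots,e_j$ need not lie in $\overline{B}$. For example, with $d=3$, $\overline{A}=\mathrm{span}\{v_1,v_2\}$, $\overline{B}=\mathrm{span}\{v_2,v_3\}$, one may take $e_1=v_1$ and then complete by $e_2=v_1+v_2\notin\overline{B}$.

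The fix is simple: instead of completing arbitrarily, choose $e_{d-k+1},\ldots,e_j$ to be a basis of $\overline{A}\cap\overline{B}$. This has the correct dimension $j+k-d=j-(d-k)$, and since $e_1,\ldots,e_{d-k}$ were chosen linearly independent from $\overline{B}$ (hence from $\overline{A}\cap\overline{B}$), the whole list $e_1,\ldots,e_j$ is a basis of $\overline{A}$. With this choice every bracket in \eqref{eq-meet} containing some $e_i$ with $i>d-k$ vanishes, the sum genuinely collapses to the single term $[e_1\cdots e_{d-k}b_1\cdots b_k]\,e_{d-k+1}\cdots e_j$, and both the nonvanishing and the identification $\overline{A\wedge B}=\overline{A}\cap\overline{B}$ follow immediately. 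Once you make this adjustment, the proof is complete.
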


\begin{example}[Pascal's Theorem]\label{example:pascal}
Let $P_1,\dots,P_6$ be six ordered points in $\mathbb{P}^2$. 
Using the geometric interpretation of Grassmann--Cayley algebra statements, the collinearity of the three points $\overline{P_1P_2}\cap \overline{P_4P_5}$, $\overline{P_2P_3}\cap \overline{P_5P_6}$, and  $\overline{P_3P_4}\cap\overline{P_6P_1}$ can be expressed as follows:
\[
(12\wedge 45)\vee(23\wedge 56)\vee(34\wedge 61)=0,
\]
 where to improve the readability, we denote each point $P_i$ just by its subscript $i$. Expanding the above expression in bracket polynomials yields
\begin{equation}\label{eq:pascal1}
[145][256][361][234]-[245][356][461][123]+[245][356][361][124]-[245][256][361][134]=0.
\end{equation}
Observe that these four bracket monomials are not standard, so we can straighten them using the following syzygies: 
\begin{equation}\label{eq:syzygiespascal}
\begin{split}
&[136][145]+[134][156]-[135][146]=0,\\
&[146][234]+[124][346]-[134][246]=0,\\
&[256][346]+[236][456]-[246][356]=0,\\
&[146][245]+[124][456]-[145][246]=0,\\
&[136][245]+[123][456]+[134][256]-[135][246]=0,\\
&[156][234]+[124][356]-[134][256]-[123][456]=0.
\end{split}
\end{equation}
Thus, we obtain the unique standard equation
\begin{equation}\label{eq:pascal2} 
[123][145][246][356]-[124][135][236][456]=0,
\end{equation}
which is equivalent to requiring that the points $P_1,\dots,P_6$ lie on a (possibly degenerate) conic (cf.,~\cite[p.118]{Cob61} and \cite[Example~3.4.3]{Stu08}).
In this way, one obtains a Grassmann--Cayley algebra proof of Pascal's Theorem.
\end{example}


\subsection{The equations for $d+4$ points on a rational normal curve in $\mathbb{P}^d$}
\label{equationsforpointsonarationalnormalcurve}

Let $n,d$ be positive integers such that $n\geq d+4$.
A \emph{rational normal curve} in $\mathbb{P}^d$ is a smooth rational curve of degree $d$. Up to projective isomorphism there is a unique rational normal curve in $\mathbb{P}^d$. So for example, for $d=2$ a rational normal curve is just a smooth conic, and for $d=3$ a twisted cubic. In this context, it is natural to consider the subvariety of $(\mathbb{P}^d)^n$ consisting of the $n$-tuples of distinct points in $\mathbb{P}^d$ that lie on a rational normal curve. This parameter space can be compactified by taking its Zariski closure in $(\mathbb{P}^d)^n$. The resulting projective variety is denoted by $V_{d,n}$, and is called the \emph{Veronese compactification}.

Since $V_{d,n}$ is defined as a Zariski closure, it is reasonable to expect that some of the point configurations parametrized by it, are supported on degenerations of rational normal curves. These degenerations are the so-called \emph{quasi-Veronese curves}, which are complete, connected, curves of degree $d$ in $\mathbb{P}^d$ not contained in a hyperplane. By a result of Artin, quasi-Veronese curves are built out of rational normal curves in the following way: each irreducible component of a quasi-Veronese curve is a rational normal curve in its span, each singularity of a quasi-Veronese curve is étale locally the union of coordinate axis, and finally each connected closed subcurve of a quasi-Veronese curve is again a quasi-Veronese curve in its span. For instance, the degree three quasi-Veronese curves are: twisted cubic, non-coplanar union of line and conic, chain of three non-coplanar lines, and non-coplanar union of three lines meeting at a point.

\par It is natural to ask what are the multi-homogeneous equations defining $V_{d,n}$, at least set-theoretically. For instance, for $d=2$ and $n=6$ the answer is given by the equation \eqref{eq:pascal2} of Example \ref{example:pascal}. Thus, $V_{2,6}$ is a hypersurface in $(\mathbb{P}^2)^6$. Moreover, by pulling back the previous equation along forgetful maps, one can obtain defining equations for $V_{2,n}$ for all $n\geq6$ (cf., \cite[Theorem 3.6]{CGMS18}).

In higher degree $d\geq3$, the story is more involved. 
We denote by $Y_{d,n}\subseteq(\mathbb{P}^d)^n$ the locus of $n$-point configurations which lie on a common hyperplane. $Y_{d,n}$ is a determinantal variety defined by all $(d+1)\times(d+1)$ minors of the $(d+1)\times n$ matrix whose columns are given by homogeneous coordinates of each copy of $\mathbb{P}^d$.
For the purpose of the current paper, we focus on the case $n=d+4$. 
Using the Gale transform, one can provide equations defining set-theoretically $V_{d,d+4}\cup Y_{d,d+4}$ (cf. \cite[Theorem 4.19]{CGMS18}). 
Since these will be useful later on, we briefly recall their construction.

\begin{notation}
	Let $m$ be a positive integer. In what follows, it is convenient to denote by $[m]$ the set $\{1,\ldots,m\}$. Caution: $[m]$ also denotes a bracket of length $1$. It will be clear from context which one of the two interpretations we mean. If $k\leq m$ is a positive integer, then let $\binom{[m]}{k}$ be the set of $k$-element subsets of $[m]$.
\end{notation}

\begin{definition}[{\cite[Definition~4.13]{CGMS18}}]
\label{definitionequationspsi}
Let $I\in\binom{[d+4]}{6}$, $I=\{i_1<\ldots<i_6\}$. Consider the equation in $(\mathbb{P}^2)^6$ given by
\[
\phi_I=[i_1i_2i_3][i_1i_4i_5][i_2i_4i_6][i_3i_5i_6]-[i_1i_2i_4][i_1i_3i_5][i_2i_3i_6][i_4i_5i_6]=0.
\]
Define $\psi_I$ to be the equation in $(\mathbb{P}^d)^{d+4}$ obtained from $\phi_I$ by operating the following substitution on the brackets:
\[
[i_\ell i_mi_n]\mapsto(-1)^{S_{\{i_\ell i_mi_n\}}}[\{i_\ell i_mi_n\}^c],
\]
where the complement $\{i_\ell i_mi_n\}^c$ is taken in $[d+4]$ and $S_{\{i_\ell i_mi_n\}}$ is the number of adjacent transpositions necessary to move the indices $i_\ell,i_m,i_n$ to $1,2,3$ respectively. Moreover, let $W_{d,d+4}\subseteq(\mathbb{P}^d)^{d+4}$  be the scheme defined by the equations
\[
\psi_I,~\textrm{for}~I\in\binom{[d+4]}{6}.
\]
\end{definition}

\begin{theorem}[{\cite[Theorem~4.19]{CGMS18}}]
$W_{d,d+4}=V_{d,d+4}\cup Y_{d,d+4}$ set-theoretically for all $d\geq3$.
\end{theorem}

We conclude this section with an explicit example of equation $\psi_I$. 
\begin{example}
Let $d=3$ and $I=[6]\in\binom{[7]}{6}$. We have that
\[
	\phi_{[6]} = |123||145||246||356| 
	- |124||135||236||456|.
\]
Therefore, by following the procedure in Definition~\ref{definitionequationspsi}, we obtain that
\[
	\psi_{[6]} = |4567||2367||1357||1247| - |3567||2467||1457||1237|.
\]
The other six equations defining $W_{3,7}$ can be found in a similar way for different choices of the index set $I\in\binom{[7]}{6}$.
\end{example}


\section{Lifting van der Waerden syzygies}
\label{liftvanderwaerdensyzygies}

The main goal of this section is to prove a technical lemma which allows us to produce van der Waerden syzygies in $\Bbbk[\Lambda(J',d')]$ starting from syzygies in $\Bbbk[\Lambda(J,d)]$, where $J\subseteq J'$ is a subset, and $d\leq d'$. 
\par Let $n,d$ be positive integers with $n\geq d$, and let $J\subsetneq\{1,\dots,n\}$.

\begin{definition}\label{def:lift}
Given $j\in\{1,\dots,n\}\setminus J$, we define a homomorphism of $\Bbbk$-algebras
\[
\eta_j\colon\Bbbk[\Lambda(J,d)]\rightarrow\Bbbk[\Lambda(J\cup\{j\},d+1)],
\]
obtained by extending $[\lambda_{i_1}\ldots\lambda_{i_d}]\mapsto[\lambda_{i_1}\ldots\lambda_{i_d}~j]$. Given a bracket polynomial $p\in\Bbbk[\Lambda(J,d)]$, we call $\eta_j(p)$ the \emph{lift} of $p$.
\end{definition}

\begin{lemma}\label{lemma:liftingsyzygies}
If $[[\alpha\beta^\bullet\gamma]]$ is a van der Waerden syzygy, then
\[
\eta_{j}([[\alpha\beta^\bullet\gamma]])=[[\alpha\eta_{j}(\beta)^\bullet\eta_{j}(\gamma)]].
\]
In particular, $\eta_{j}(\mathcal{I}_{J,d})\subseteq\mathcal{I}_{J\cup\{j\},d+1}$. That is, the lift of syzygies are syzygies.
\end{lemma}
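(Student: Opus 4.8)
The plan is to prove the identity $\eta_j([[\alpha\beta^\bullet\gamma]]) = [[\alpha\,\eta_j(\beta)^\bullet\,\eta_j(\gamma)]]$ by directly comparing the two defining sums term by term, and then to deduce the inclusion $\eta_j(\mathcal{I}_{J,d}) \subseteq \mathcal{I}_{J\cup\{j\},d+1}$ as an immediate corollary, using the fact (recalled in \S\ref{thebracketring}) that the van der Waerden syzygies generate $\mathcal{I}_{J,d}$. First I would fix the data: $s \in \{1,\dots,d\}$, $\alpha \in \Lambda(J,s-1)$, $\beta \in \Lambda(J,d+1)$, $\gamma \in \Lambda(J,d-s)$, so that $[[\alpha\beta^\bullet\gamma]] = \sum_{\tau\in\Lambda(d+1,s)}\sign(\tau,\tau^*)[\alpha\beta_{\tau_1^*}\cdots\beta_{\tau_{d+1-s}^*}][\beta_{\tau_1}\cdots\beta_{\tau_s}\gamma]$. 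Applying $\eta_j$ to each monomial simply appends the index $j$ to the \emph{second} bracket of each term, giving $\eta_j([[\alpha\beta^\bullet\gamma]]) = \sum_{\tau}\sign(\tau,\tau^*)[\alpha\beta_{\tau_1^*}\cdots\beta_{\tau_{d+1-s}^*}][\beta_{\tau_1}\cdots\beta_{\tau_s}\gamma\,j]$.

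Next I would unwind the right-hand side. Now we are forming a van der Waerden syzygy in $\mathbb{C}[\Lambda(J\cup\{j\},d+1)]$ with the \emph{same} value of $s$, with $\alpha$ unchanged (still of length $s-1$), with $\beta' := \eta_j(\beta) = \beta_1\cdots\beta_{d+1}\,j$ of length $d+2 = (d+1)+1$, and with $\gamma' := \eta_j(\gamma) = \gamma_1\cdots\gamma_{d-s}\,j$ of length $d-s+1 = (d+1)-s$. (One should note $j\notin J$ ensures these are honest brackets and the lengths are correct.) So $[[\alpha\,\eta_j(\beta)^\bullet\,\eta_j(\gamma)]] = \sum_{\sigma\in\Lambda(d+2,s)}\sign(\sigma,\sigma^*)[\alpha\,\beta'_{\sigma_1^*}\cdots\beta'_{\sigma_{d+2-s}^*}][\beta'_{\sigma_1}\cdots\beta'_{\sigma_s}\,\gamma']$, where $\sigma$ now ranges over $s$-subsets of $\{1,\dots,d+2\}$. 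The key combinatorial observation is that $j$ is the last entry (position $d+2$) of $\beta'$, and the second bracket of each term already receives the whole of $\gamma'$, which ends in $j$; so a term with $j \in \{\beta'_{\sigma_1},\dots,\beta'_{\sigma_s}\}$ has $j$ appearing twice in the second bracket and hence vanishes. Therefore only those $\sigma$ with $d+2 \in \sigma^*$ survive, i.e. $\sigma \in \Lambda(d+1,s)$ viewed inside $\Lambda(d+2,s)$; for such a $\sigma$ one checks that the complement $\sigma^*$ (taken in $\{1,\dots,d+2\}$) is $\tau^* \cup \{d+2\}$ where $\tau^*$ is the complement in $\{1,\dots,d+1\}$, that $\beta'_{d+2} = j$, and that $\sign(\sigma,\sigma^*) = \sign(\tau,\tau^*)$ since moving the extra fixed index $d+2$ to the back of the second block contributes no sign. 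Matching the two sums term by term then yields the identity.

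For the final assertion, any $p \in \mathcal{I}_{J,d}$ is a $\mathbb{C}[\Lambda(J,d)]$-linear combination of van der Waerden syzygies $[[\alpha\beta^\bullet\gamma]]$; since $\eta_j$ is a $\mathbb{C}$-algebra homomorphism, $\eta_j(p)$ is a $\mathbb{C}[\Lambda(J\cup\{j\},d+1)]$-linear combination of the elements $\eta_j([[\alpha\beta^\bullet\gamma]]) = [[\alpha\,\eta_j(\beta)^\bullet\,\eta_j(\gamma)]]$, each of which is a van der Waerden syzygy in $\mathbb{C}[\Lambda(J\cup\{j\},d+1)]$ and hence lies in $\mathcal{I}_{J\cup\{j\},d+1}$; therefore $\eta_j(p) \in \mathcal{I}_{J\cup\{j\},d+1}$.

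The main obstacle I anticipate is the careful bookkeeping of signs in the step $\sign(\sigma,\sigma^*) = \sign(\tau,\tau^*)$: one must check that appending the fixed index $d+2$ at the end of $\beta'$ and $\gamma'$ does not disturb the relative order recorded by $\sign(\tau,\tau^*)$. This is routine but must be done with care; it may help to note that, alternatively, one can verify the identity $\eta_j([[\alpha\beta^\bullet\gamma]]) = [[\alpha\,\eta_j(\beta)^\bullet\,\eta_j(\gamma)]]$ by applying the generic coordinatization $\phi_{J\cup\{j\},d+1}$ to both sides and observing both are the expansion of a single Laplace/shuffle relation among the relevant minors, but the term-by-term bijection above is the cleanest route.
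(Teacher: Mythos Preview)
Your overall strategy---expand the right-hand side over $\Lambda(d+2,s)$, kill the terms with $d+2\in\sigma$ because $j$ then appears twice in the second bracket, and match the survivors bijectively with $\Lambda(d+1,s)$---is exactly the paper's proof. The sign check $\sign(\sigma,\sigma^*)=\sign(\tau,\tau^*)$ is also the same observation the paper makes.

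There is, however, one concrete slip in your write-up of the left-hand side. You say that applying $\eta_j$ to each monomial ``simply appends the index $j$ to the \emph{second} bracket of each term,'' and you write
\[
\eta_j([[\alpha\beta^\bullet\gamma]]) = \sum_{\tau}\sign(\tau,\tau^*)\,[\alpha\beta_{\tau_1^*}\cdots\beta_{\tau_{d+1-s}^*}]\,[\beta_{\tau_1}\cdots\beta_{\tau_s}\gamma\,j].
\]
But $\eta_j$ is a $\mathbb{C}$-algebra homomorphism that appends $j$ to \emph{every} bracket, so on a product of two brackets it appends $j$ to both factors:
\[
\eta_j([[\alpha\beta^\bullet\gamma]]) = \sum_{\tau}\sign(\tau,\tau^*)\,[\alpha\beta_{\tau_1^*}\cdots\beta_{\tau_{d+1-s}^*}\,j]\,[\beta_{\tau_1}\cdots\beta_{\tau_s}\gamma\,j].
\]
As you yourself observe when unwinding the right-hand side, the surviving terms there have $\beta'_{d+2}=j$ sitting at the end of the \emph{first} bracket (since $d+2\in\sigma^*$), in addition to the $j$ coming from $\gamma'$ in the second bracket. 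So with your stated LHS the two sums would not match term by term. Once you correct the LHS to carry $j$ in both brackets, the bijection closes exactly as in the paper.
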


\begin{proof}
The second statement follow from the first, since van der Waerden syzygies are a system of generators of $\mathcal{I}_{J,d}$.
So, we prove the first claim. Define $\delta=\eta_{j}(\beta)$, so that $\delta_{d+2}=j$. We have that
\[
[[\alpha\eta_{j}(\beta)^\bullet\eta_{j}(\gamma)]]=[[\alpha\delta^\bullet\eta_{j}(\gamma)]]=\sum_{\tau\in\Lambda(d+2,s)}\sign(\tau,\tau^*)[\alpha\delta_{\tau_1^*}\ldots\delta_{\tau_{d+2-s}^*}][\delta_{\tau_1}\ldots\delta_{\tau_{s}}\eta_j(\gamma)]
\]
\[
=\sum_{\substack{\tau\in\Lambda(d+2,s),\\d+2\in\tau^*}}\sign(\tau,\tau^*)[\alpha\delta_{\tau_1^*}\ldots\delta_{\tau_{d+2-s}^*}][\beta_{\tau_1}\ldots\beta_{\tau_{s}}\gamma~j].
\]
Fix $\tau$ as in the sum above. Observe that $\tau_{d+2-s}^*=d+2$, because $\tau_1^*<\ldots<\tau_{d+2-s}^*$ and $d+2\in\tau^*$. Hence $\delta_{\tau_{d+2-s}^*}=\delta_{d+2}=j$, implying that
\[
[[\alpha\eta_{j}(\beta)^\bullet\eta_{j}(\gamma)]]=\sum_{\substack{\tau\in\Lambda(d+2,s),\\d+2\in\tau^*}}\sign(\tau,\tau^*)[\alpha\beta_{\tau_1^*}\ldots\beta_{\tau_{d+1-s}^*}j][\beta_{\tau_1}\ldots\beta_{\tau_{s}}\gamma~j].
\]
To conclude, again let $\tau$ be as in the sum above. Let $\sigma=\tau$ viewed as an element of $\Lambda(d+1,s)$, so that $\sigma^*$ equals $\tau^*$ with the last entry $\tau_{d+2-s}^*=d+2$ removed. In particular, $\sign(\tau,\tau^*)=\sign(\sigma,\sigma^*)$. Hence we can conclude that
\[
[[\alpha\eta_{j}(\beta)^\bullet\eta_{j}(\gamma)]]=\sum_{\sigma\in\Lambda(d+1,s)}\sign(\sigma,\sigma^*)[\alpha\beta_{\sigma_1^*}\ldots\beta_{\sigma_{d+1-s}^*}j][\beta_{\sigma_1}\ldots\beta_{\sigma_{s}}\gamma~j]
\]
\[
=\sum_{\sigma\in\Lambda(d+1,s)}\sign(\sigma,\sigma^*)\eta_{j}([\alpha\beta_{\sigma_1^*}\ldots\beta_{\sigma_{d+1-s}^*}])\eta_{j}([\beta_{\sigma_1}\ldots\beta_{\sigma_{s}}\gamma])=\eta_{j}([[\alpha\beta^\bullet\gamma]]).\qedhere
\]
\end{proof}

\begin{corollary}\label{cor-liftingequations}
Let $J\subseteq\mathbb{N}$ be a finite index set and let $d\leq|J|$ be a positive integer. Then for any $j\in\mathbb{N}\setminus J$ we have a well-defined homomorphism of $\Bbbk$-algebras $\eta_j\colon\mathcal{B}_{J,d}\rightarrow\mathcal{B}_{J\cup\{j\},d+1}$.
\end{corollary}

\begin{example}
Consider again the van der Waerden syzygy of Example~\ref{firstexampleofvanderwaerdensyzygy}. Then by Lemma~\ref{lemma:liftingsyzygies} one has the van der Waerden syzygy
\[
[1467][1357]-[1367][1457]-[1347][1567]=0.
\]
\end{example}


\section{Alternative description of the equations $\psi_I$}
\label{alternativedescriptionequationswithlifts}

Let $d\geq3$ be an integer. In this section, we obtain a different description of the equations $\psi_I,I\in\binom{[d+4]}{6}$, which cut out set-theoretically the variety $W_{d,d+4}=V_{d,d+4}\cup Y_{d,d+4}$.

\begin{proposition}\label{prop:newequationsWdn}
	Let $d\geq3$ be an integer. For each $I\in\binom{[d+4]}{6}$, we set $I=\{i_1<\ldots<i_6\}$ and $I^c=\{j_1<\ldots<j_{d-2}\}$. 
	Then the variety $W_{d,d+4}$ is defined set-theoretically by the equations
	\begin{equation*}
	(\eta_{j_{d-2}}\circ\ldots\circ\eta_{j_1})(\phi_I)=0\ \text{for each }\ I\in\binom{[d+4]}{6},
	\end{equation*}
	where $\phi_I=[i_1i_2i_3][i_1i_4i_5][i_2i_4i_6][i_3i_5i_6]-[i_1i_2i_4][i_1i_3i_5][i_2i_3i_6][i_4i_5i_6]$ and $\eta_{j_i}$ denotes the lift homomorphism in Definition \ref{def:lift}.
\end{proposition}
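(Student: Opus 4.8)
The plan is to show that the equations $(\eta_{j_{d-2}}\circ\cdots\circ\eta_{j_1})(\phi_I)$ cut out the same set as the equations $\psi_I$ from \S\ref{equationsforpointsonarationalnormalcurve}, since the latter are already known to define $W_{d,d+4}$ set-theoretically by \cite[Theorem 4.19]{CGMS18}. The natural approach is to prove the stronger statement that, for each fixed $I\in\binom{[d+4]}{6}$, the bracket polynomial $(\eta_{j_{d-2}}\circ\cdots\circ\eta_{j_1})(\phi_I)$ equals $\psi_I$ in the bracket ring $\mathcal{B}_{[d+4],d}$, possibly up to an overall nonzero scalar (and hence they define the same subvariety of $(\mathbb{P}^d)^{d+4}$). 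This is purely a computation comparing two explicit expressions, so the task reduces to tracking brackets and signs carefully.

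First I would unwind the definitions on both sides for a fixed $I$. On one side, each monomial of $\phi_I$ is a product of four brackets of the form $[i_\ell i_m i_n]$ in $\mathbb{C}[\Lambda(\{i_1,\dots,i_6\},3)]$; applying $\eta_{j_1},\dots,\eta_{j_{d-2}}$ successively appends the indices $j_1,\dots,j_{d-2}$ to each bracket, producing brackets of length $d+1$ of the form $[i_\ell i_m i_n\, j_1\cdots j_{d-2}]$, with the understanding that such a bracket is then reordered into increasing order at the cost of the sign of the sorting permutation. On the other side, $\psi_I$ is obtained from $\phi_I$ by the substitution $[i_\ell i_m i_n]\mapsto (-1)^{S_{\{i_\ell i_m i_n\}}}[\{i_\ell i_m i_n\}^c]$, where the complement is taken in $[d+4]$. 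The key observation is that $\{i_\ell i_m i_n\}^c = I^c \cup (I\setminus\{i_\ell,i_m,i_n\})$, which as a set is exactly the same $(d+1)$-element set appearing in the lift: the three ``remaining'' elements of $I$ together with all of $I^c=\{j_1,\dots,j_{d-2}\}$. So up to ordering, the lift and the Gale-dual substitution produce brackets on the same index sets; only the signs need to be reconciled.

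The main obstacle is therefore the sign bookkeeping: one must check that, for each of the four brackets in each of the two monomials of $\phi_I$, the sign picked up when sorting $[i_\ell i_m i_n\, j_1\cdots j_{d-2}]$ into increasing order differs from $(-1)^{S_{\{i_\ell i_m i_n\}}}$ (the sign picked up when sorting $[\{i_\ell i_m i_n\}^c]$ into the complement of $\{1,2,3\}$, i.e.\ the sign attached to $\psi_I$) by a factor that is \emph{the same} across all four brackets of a monomial, and in fact the same for both monomials — so that the two polynomials agree up to a single global sign. To do this cleanly I would introduce a bit of notation for the positions of $i_1<\cdots<i_6$ and $j_1<\cdots<j_{d-2}$ inside $\{1,\dots,d+4\}$ and compute each sorting sign as a product of contributions coming from pairwise comparisons between the $i$'s being moved and the $j$'s, plus contributions internal to the $i$'s. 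The point is that the ``external'' contributions ($i$ versus $j$) are governed only by which three of $i_1,\dots,i_6$ appear in the bracket, and one checks these cancel in the comparison with $S_{\{i_\ell i_m i_n\}}$; the ``internal'' contributions are exactly the adjacent transpositions counted by $S_{\{i_\ell i_m i_n\}}$ up to a correction depending only on $d$. The remark in \S\ref{equationsforpointsonarationalnormalcurve} that the true sign is $(-1)^{S+d+1}$ but the $(-1)^{d+1}$ can be ignored is precisely the $d$-dependent global factor, and it is harmless since $d$ is fixed and it multiplies every term equally.

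Once the term-by-term sign comparison is carried out, one concludes that $(\eta_{j_{d-2}}\circ\cdots\circ\eta_{j_1})(\phi_I) = \pm\,\psi_I$ in $\mathbb{C}[\Lambda([d+4],d)]$ (no straightening needed — this is an equality of polynomials, not merely of bracket-ring classes), hence the vanishing loci coincide for every $I\in\binom{[d+4]}{6}$, and the result follows from \cite[Theorem 4.19]{CGMS18}. I would present the argument as: (1) identify the common index sets via the complement computation; (2) a short lemma computing the sorting sign of $[i_\ell i_m i_n\, j_1\cdots j_{d-2}]$ in terms of $S_{\{i_\ell i_m i_n\}}$ and data depending only on $I$, $I^c$, and $d$; (3) observe that the $I$-and-$I^c$-dependent part is constant over the four brackets of each monomial and over both monomials, so it factors out; (4) conclude equality up to global sign and invoke \cite[Theorem 4.19]{CGMS18}.
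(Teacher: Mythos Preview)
Your overall strategy---reduce to showing $(\eta_{j_{d-2}}\circ\cdots\circ\eta_{j_1})(\phi_I)=\pm\psi_I$ and invoke \cite[Theorem 4.19]{CGMS18}---is exactly the paper's. But there is a genuine error in your ``key observation.'' You write that $\{i_\ell i_m i_n\}^c = I^c\cup(I\setminus\{i_\ell,i_m,i_n\})$ is ``exactly the same $(d+1)$-element set appearing in the lift.'' It is not: the lift of $[i_\ell i_m i_n]$ is $[i_\ell i_m i_n\,j_1\cdots j_{d-2}]$, whose index set is $\{i_\ell,i_m,i_n\}\cup I^c$, using the \emph{original} triple, not the complementary one. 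So the lift and the Gale-dual substitution do \emph{not} match bracket-by-bracket, and your subsequent plan to compare signs term-by-term as written cannot work.

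What rescues the argument is a feature of $\phi_I$ that you have not identified: the four triples $\{i_1i_2i_3\},\{i_1i_4i_5\},\{i_2i_4i_6\},\{i_3i_5i_6\}$ in the first monomial are precisely the $I$-complements of the four triples $\{i_4i_5i_6\},\{i_2i_3i_6\},\{i_1i_3i_5\},\{i_1i_2i_4\}$ in the second. Thus the Gale-dual substitution \emph{swaps} the two monomials (as unordered products of brackets) relative to the lift. The paper makes this explicit by rewriting each $(-1)^{S_{\{i_\ell i_m i_n\}}}[\{i_\ell i_m i_n\}^c]$ as $(-1)^{S_{\{i_\ell i_m i_n\}}+S_{\{i_{\ell'} i_{m'} i_{n'}\}}}[i_{\ell'} i_{m'} i_{n'}\,j_1\cdots j_{d-2}]$ for the complementary triple $\{i_{\ell'},i_{m'},i_{n'}\}=I\setminus\{i_\ell,i_m,i_n\}$, and then observing that the resulting global sign is the \emph{same} eight-fold product $(-1)^{S(I)}$ for both monomials (since the same eight triples appear, just distributed differently). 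This gives $\psi_I=(-1)^{S(I)+1}(\eta_{j_{d-2}}\circ\cdots\circ\eta_{j_1})(\phi_I)$. Your ``sorting sign'' lemma, as stated, would be comparing the wrong pair of brackets; you need to insert the monomial-swap observation first.
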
	
\begin{proof}
$W_{d,d+4}$ is defined set-theoretically by the equations $\psi_I$, which recall are obtained from $\phi_I$ by operating the following substitution on the brackets:
\[
[i_\ell i_mi_n]\mapsto(-1)^{S_{\{i_\ell i_mi_n\}}}[\{i_\ell i_mi_n\}^c],
\]
where $S_{\{i_\ell i_mi_n\}}$ counts the number of adjacent transpositions necessary to move the indices $i_\ell,i_m,i_n$ to $1,2,3$ respectively.
\par For each such $I$, we can rewrite the substituted brackets as follows:
\[
(-1)^{S_{\{i_\ell i_mi_n\}}}[\{i_\ell i_mi_n\}^c]=(-1)^{S_{\{i_\ell i_mi_n\}}}(-1)^{S_{\{i_ai_bi_c\}}}[i_ai_bi_cj_1\ldots j_{d-2}],
\]
where $I=\{i_\ell,i_m,i_n,i_a,i_b,i_c\}$, $\ell<m<n$, and $a<b<c$. Set  $S(I)=S_{\{i_1i_2i_3\}}+S_{\{i_4i_5i_6\}}+S_{\{i_1i_4i_5\}}+S_{\{i_2i_3i_6\}}+S_{\{i_2i_4i_6\}}+S_{\{i_1i_3i_5\}}+S_{\{i_3i_5i_6\}}+S_{\{i_1i_2i_4\}}$. 
Using the equalities above, we obtain that
\begin{align*}
\psi_I=(-1)^{S(I)}(&[i_4i_5i_6j_1\ldots j_{d-2}][i_2i_3i_6j_1\ldots j_{d-2}][i_1i_3i_5j_1\ldots j_{d-2}][i_1i_2i_4j_1\ldots j_{d-2}]\\
-&[i_3i_5i_6j_1\ldots j_{d-2}][i_2i_4i_6j_1\ldots j_{d-2}][i_1i_4i_5j_1\ldots j_{d-2}][i_1i_2i_3j_1\ldots j_{d-2}]).
\end{align*}
At this point it is easy to observe that
\[
\psi_I=(-1)^{S(I)+1}(\eta_{j_{d-2}}\circ\ldots\circ\eta_{j_1})(\phi_I).
\]
Since we are interested in the vanishing locus, we can ignore the sign $(-1)^{S(I)+1}$. So, the claim is proved.
\end{proof}	

\begin{remark}
Observe that the number $S(I)$ used in the proof of Proposition \ref{prop:newequationsWdn} has the same parity, namely it is always even.
The reason is that each $i_j$ appears an even number of times (four times) in the expression of $S(I)$.
\end{remark}


\section{Main results}
\label{sectionwithmainresult}

\subsection{Generalized Pascal's Theorem}

We are now ready to give a proof of Theorem~\ref{maintheoremintro2} in the Introduction, which we restate here for the reader's convenience.

\begin{theoremb}
	Let $d\geq3$, let $P_1,\ldots,P_{d+4}$ be points in $\mathbb{P}^d$ not on a hyperplane. Then the following are equivalent:
	\begin{enumerate}[(i)]
		\item $(P_1,\ldots,P_{d+4})\in V_{d,d+4}$ (equivalently, they lie on a quasi-Veronese curve);
		\item For every $I=\{i_1<\ldots<i_6\}\subseteq\{1,\dots,d+4\}$, $I^c=\{j_1<\ldots<j_{d-2}\}$ the following equality in the Grassmann--Cayley algebra holds:
		\begin{gather*}
		(P_{i_1}P_{i_2}\wedge P_{i_4}P_{i_5}P_{j_1}\ldots P_{j_{d-2}})\vee(P_{i_2}P_{i_3}\wedge P_{i_5}P_{i_6}P_{j_1}\ldots P_{j_{d-2}})\\
		\vee(P_{i_3}P_{i_4}\wedge P_{i_6}P_{i_1}P_{j_1}\ldots P_{j_{d-2}})\vee P_{j_1}\ldots P_{j_{d-2}}=0.
		\end{gather*}
	\end{enumerate}
\end{theoremb}

\begin{proof}
Recall from Proposition \ref{prop:newequationsWdn} that $(\eta_{j_{d-2}}\circ\ldots\circ\eta_{j_1})(\phi_I)=0$ for $I\in\binom{[d+4]}{6}$ are the defining equations of $W_{d,d+4}=V_{d,d+4}\cup Y_{d,d+4}$. 
Observe that since $P_1,\ldots,P_{d+4}$ are not on a hyperplane by assumption, then $(P_1,\ldots,P_{d+4})\notin Y_{d,d+4}$.
Therefore we have that $(P_1,\ldots,P_{d+4})\in V_{d,d+4}$ if and only if they satisfy the equations $(\eta_{j_{d-2}}\circ\ldots\circ\eta_{j_1})(\phi_I)=0$ for each $I\in\binom{[d+4]}{6}$.
\par Fix $I\in\binom{[d+4]}{6}$ and consider the corresponding Grassmann--Cayley algebra expression as in \textrm{(ii)}. We expand it in the bracket polynomial algebra $\Bbbk[\Lambda(I\cup I^c,d+1)]$ and show that is equivalent to the equation $(\eta_{j_{d-2}}\circ\ldots\circ\eta_{j_1})(\phi_I)=0$ modulo appropriate syzygies. 
This would prove what we need. 

\par We start by expanding the three meets. For instance, the first meet becomes
\[
P_{i_1}P_{i_2}\wedge P_{i_4}P_{i_5}P_{j_1}\ldots P_{j_{d-2}}=[P_{i_1}P_{i_4}P_{i_5}P_{j_1}\ldots P_{j_{d-2}}]P_{i_2}-[P_{i_2}P_{i_4}P_{i_5}P_{j_1}\ldots P_{j_{d-2}}]P_{i_1}.
\]
Let us denote $[P_{a}P_{b}P_{c}P_{j_1}\ldots P_{j_{d-2}}]$ simply by $[abcj_1\ldots j_{d-2}]$. After distributing the joins with respect to the sums, we obtain the simplified expression
\begin{equation}\label{eq:Vdd+41}
\begin{split}
&[i_1i_4i_5j_1\ldots j_{d-2}][i_2i_5i_6j_1\ldots j_{d-2}][i_3i_6i_1j_1\ldots j_{d-2}][i_2i_3i_4j_1\ldots j_{d-2}]+\\
- &[i_2i_4i_5j_1\ldots j_{d-2}][i_3i_5i_6j_1\ldots j_{d-2}][i_4i_6i_1j_1\ldots j_{d-2}][i_1i_2i_3j_1\ldots j_{d-2}]+\\
+ &[i_2i_4i_5j_1\ldots j_{d-2}][i_3i_5i_6j_1\ldots j_{d-2}][i_3i_6i_1j_1\ldots j_{d-2}][i_1i_2i_4j_1\ldots j_{d-2}]+\\
- &[i_2i_4i_5j_1\ldots j_{d-2}][i_2i_5i_6j_1\ldots j_{d-2}][i_3i_6i_1j_1\ldots j_{d-2}][i_1i_3i_4j_1\ldots j_{d-2}] = 0.
\end{split}
\end{equation}
Observe that this bracket polynomial is obtained by applying consecutive lifts $\eta_{j_1},\dots,\eta_{j_{d-2}}$ to the following equation in $\Bbbk[\Lambda(I,3)]$
\begin{equation*}
\begin{split}
&[i_1i_4i_5][i_2i_5i_6][i_3i_6i_1][i_2i_3i_4]-[i_2i_4i_5][i_3i_5i_6][i_4i_6i_1][i_1i_2i_3]+\\
+&[i_2i_4i_5][i_3i_5i_6][i_3i_6i_1][i_1i_2i_4]-[i_2i_4i_5][i_2i_5i_6][i_3i_6i_1][i_1i_3i_4]=0.
\end{split}
\end{equation*}
As we did in Example~\ref{example:pascal}, applying the straightening algorithm to this equation, yields the unique standard representation in $\mathcal{B}_{I,3}$
\begin{equation*} 
[i_1i_2i_3][i_1i_4i_5][i_2i_4i_6][i_3i_5i_6]-[i_1i_2i_4][i_1i_3i_5][i_2i_3i_6][i_4i_5i_6]=0.
\end{equation*}
Now, applying lifts $\eta_{j_1},\dots,\eta_{j_{d-2}}$ to the previous equation, we obtain the following equation in $\mathcal{B}_{I\cup I^c,d+1}$, which is equivalent to equation~\eqref{eq:Vdd+41} by Corollary \ref{cor-liftingequations}:
\begin{equation}\label{eq:Vdd+42}
\begin{split}
&[i_1i_2i_3j_1\ldots j_{d-2}][i_1i_4i_5j_1\ldots j_{d-2}][i_2i_4i_6j_1\ldots j_{d-2}][i_3i_5i_6j_1\ldots j_{d-2}]+\\
- &[i_1i_2i_4j_1\ldots j_{d-2}][i_1i_3i_5j_1\ldots j_{d-2}][i_2i_3i_6j_1\ldots j_{d-2}][i_4i_5i_6j_1\ldots j_{d-2}]=0.
\end{split}
\end{equation}
Finally, observe that equation~\eqref{eq:Vdd+42} is exactly $(\eta_{j_{d-2}}\circ\ldots\circ\eta_{j_1})(\phi_I)=0$, which is one of the defining equations of $W_{d,d+4}$. Repeating the previous reasoning for all $I\in\binom{[d+4]}{6}$, we obtain all the defining equations of $W_{d,d+4}$.
\end{proof}

We illustrate the central step of the proof of Theorem~\ref{maintheoremintro2} in the following example.

\begin{example}\label{ex:V37}
	Consider the following bracket equation in $\Bbbk[\Lambda(7,4)]$, obtained by applying $\eta_7$ to \eqref{eq:pascal1}
	\begin{equation}\label{eq:V371}
	\begin{split}
	&[1457][2567][3617][2347]-[2457][3567][4617][1237]+\\
	+&[2457][3567][3617][1247]-[2457][2567][3617][1347]=0.
	\end{split}
	\end{equation}
	Since by Lemma \ref{lemma:liftingsyzygies} we have  $\eta_{7}(\mathcal{I}_{6,3})\subseteq\mathcal{I}_{7,4}$, we know that the lift of the syzygies \eqref{eq:syzygiespascal} are syzygies for the bracket algebra $\Bbbk[\Lambda(7,4)]$.
	Therefore, applying the straightening algorithm to \eqref{eq:V371} yields the unique standard bracket representation
	\begin{equation*}
	[1237][1457][2467][3567]-[1247][1357][2367][4567]=0,
	\end{equation*}
	which is also obtained by applying $\eta_7$ to \eqref{eq:pascal2}, the unique standard representation  of \eqref{eq:pascal1}.
\end{example}

The following corollary follows immediately from the geometric interpretation of the Grassmann--Cayley algebra expression in Theorem~\ref{maintheoremintro2}~(ii).

\begin{corollary}[Generalized Pascal's Theorem]\label{corollary-main}
Let $P_1,\ldots,P_{d+4}$ be points in $\mathbb{P}^d$ not on a hyperplane. Then $P_1,\ldots,P_{d+4}$ lie on a quasi-Veronese curve if and only if for every $I\in\binom{[d+4]}{6}$, $I=\{i_1<\ldots<i_6\}$, $I^c=\{j_1<\ldots<j_{d-2}\}$, the following $d+1$ points lie on a hyperplane:
\begin{itemize}
\item The intersection of the line $P_{i_1}P_{i_2}$ with the hyperplane $P_{i_4}P_{i_5}P_{j_1}\ldots P_{j_{d-2}}$;
\item The intersection of the line $P_{i_2}P_{i_3}$ with the hyperplane $P_{i_5}P_{i_6}P_{j_1}\ldots P_{j_{d-2}}$;
\item The intersection of the line $P_{i_3}P_{i_4}$ with the hyperplane $P_{i_6}P_{i_1}P_{j_1}\ldots P_{j_{d-2}}$;
\item The points $P_{j_1},\ldots,P_{j_{d-2}}$.
\end{itemize}
In particular, if $P_1,\ldots,P_{d+4}$ are in general linear position then the previous conditions are equivalent to requiring that $P_1,\ldots,P_{d+4}$ lie on a rational normal curve.
\end{corollary}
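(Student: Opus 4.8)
The plan is to deduce Corollary~\ref{corollary-main} directly from Theorem~\ref{theorem-main} by translating the Grassmann-Cayley algebra equality in part (ii) into its geometric meaning via Proposition~\ref{geometrictranslation}. First I would fix $I=\{i_1<\cdots<i_6\}\in\binom{[d+4]}{6}$ and $I^c=\{j_1<\cdots<j_{d-2}\}$, and analyze each of the three meets appearing in the expression. Since $P_1,\ldots,P_{d+4}$ are not on a hyperplane, the $d+1$ points $P_{i_1},P_{i_2},P_{i_4},P_{i_5},P_{j_1},\ldots,P_{j_{d-2}}$ span $\mathbb{P}^d$ (they are $d+1$ of the points in general-enough position); hence $P_{i_1}P_{i_2}$ is a genuine line, $P_{i_4}P_{i_5}P_{j_1}\cdots P_{j_{d-2}}$ is a genuine hyperplane, and by the second bullet of Proposition~\ref{geometrictranslation} the meet $P_{i_1}P_{i_2}\wedge P_{i_4}P_{i_5}P_{j_1}\cdots P_{j_{d-2}}$ is a nonzero extensor of step $1$ representing exactly the intersection point $\overline{P_{i_1}P_{i_2}}\cap\overline{P_{i_4}P_{i_5}P_{j_1}\cdots P_{j_{d-2}}}$. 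The same reasoning applies to the other two meets, and $P_{j_1}\cdots P_{j_{d-2}}$ is an extensor of step $d-2$ representing the span of $P_{j_1},\ldots,P_{j_{d-2}}$.

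Next I would observe that the full expression in Theorem~\ref{theorem-main}(ii) is the join ($\vee$) of three step-$1$ extensors and one step-$(d-2)$ extensor, hence an extensor of step $3+(d-2)=d+1$, i.e.\ a scalar — this is consistent with it being set equal to $0$. By the first bullet of Proposition~\ref{geometrictranslation}, this join is $0$ if and only if the $d+1$ points listed (the three intersection points together with $P_{j_1},\ldots,P_{j_{d-2}}$) are linearly dependent, equivalently they lie on a common hyperplane. Running this equivalence over all $I\in\binom{[d+4]}{6}$ and invoking Theorem~\ref{theorem-main}, I obtain that $P_1,\ldots,P_{d+4}$ lie on a quasi-Veronese curve if and only if all these hyperplane-containment conditions hold, which is the first assertion of the corollary.

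For the final sentence, I would recall the general position assumption. If $P_1,\ldots,P_{d+4}$ are in general linear position, then in particular no $d+1$ of them lie on a hyperplane, so the hypothesis ``not on a hyperplane'' of Theorem~\ref{theorem-main} is satisfied, and moreover among the quasi-Veronese curves through the points only the irreducible one — the rational normal curve — is possible: a reducible or degenerate quasi-Veronese curve has a component of lower degree spanning a proper subspace, forcing enough of the $d+4$ points into a hyperplane and contradicting general position (this last point can be justified by the description of quasi-Veronese curves recalled in \S\ref{equationsforpointsonarationalnormalcurve}, namely that each proper connected subcurve spans a proper subspace). Hence under general linear position the condition ``lies on a quasi-Veronese curve'' collapses to ``lies on a rational normal curve,'' giving the last claim.

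The main obstacle, and the only point requiring genuine care rather than bookkeeping, is verifying that each meet is \emph{nonzero} so that Proposition~\ref{geometrictranslation} applies and the three intersection points are honestly defined: this needs the observation that the relevant $d+1$ of the points $P_{i_k}, P_{j_\ell}$ are in linearly general position, which follows from the hypothesis that $P_1,\ldots,P_{d+4}$ do not all lie on a hyperplane only after checking that the specific $(d+1)$-subsets occurring in the three hyperplanes are non-degenerate — for the geometric reformulation to make literal sense one should either add this as a standing non-degeneracy assumption or note that when a meet vanishes the corresponding intersection is a positive-dimensional linear space and the statement should be read accordingly. I would handle this by remarking that under general linear position (the case of ultimate interest) all these subsets are automatically non-degenerate, and in the general quasi-Veronese setting the Grassmann-Cayley identity of Theorem~\ref{theorem-main}(ii) holds as an algebraic identity regardless, so the corollary's geometric phrasing is the faithful translation precisely on the locus where the meets do not degenerate.
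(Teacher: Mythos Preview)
Your proposal is correct and follows exactly the paper's approach: the paper states only that the corollary ``follows immediately from the geometric interpretation of the Grassmann-Cayley algebra expression in Theorem~\ref{theorem-main}~(ii),'' and you have spelled out precisely that translation via Proposition~\ref{geometrictranslation}. Your final paragraph flagging the non-degeneracy of the meets is a genuine subtlety the paper glosses over (the hypothesis ``not on a hyperplane'' does not by itself guarantee each specific $(d+1)$-subset spans $\mathbb{P}^d$), and your handling of it---noting the identity holds algebraically regardless, with the literal geometric reading valid on the non-degenerate locus---is the right way to resolve it.
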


In the three-dimensional case, that is for seven points in $\mathbb{P}^3$ the situation is particularly nice.
The fact that seven points $P_1,\dots,P_7$ lie on a twisted cubic implies, by choosing $I=\{1,\dots,6\}$ in the previous corollary, that the three intersection points 
$\overline{P_{1}P_{2}}\cap \overline{P_{4}P_{5}P_{7}}$, $\overline{P_{2}P_{3}}\cap \overline{P_{5}P_{6}P_{7}}$, and $\overline{P_{3}P_{4}}\cap \overline{P_{6}P_{1}P_{7}}$ are coplanar with $P_7$. We illustrate this in Figure \ref{picture7pointswithtwistedcubic}.

\begin{figure}
	\centering
	\includegraphics[scale=0.30,valign=t]{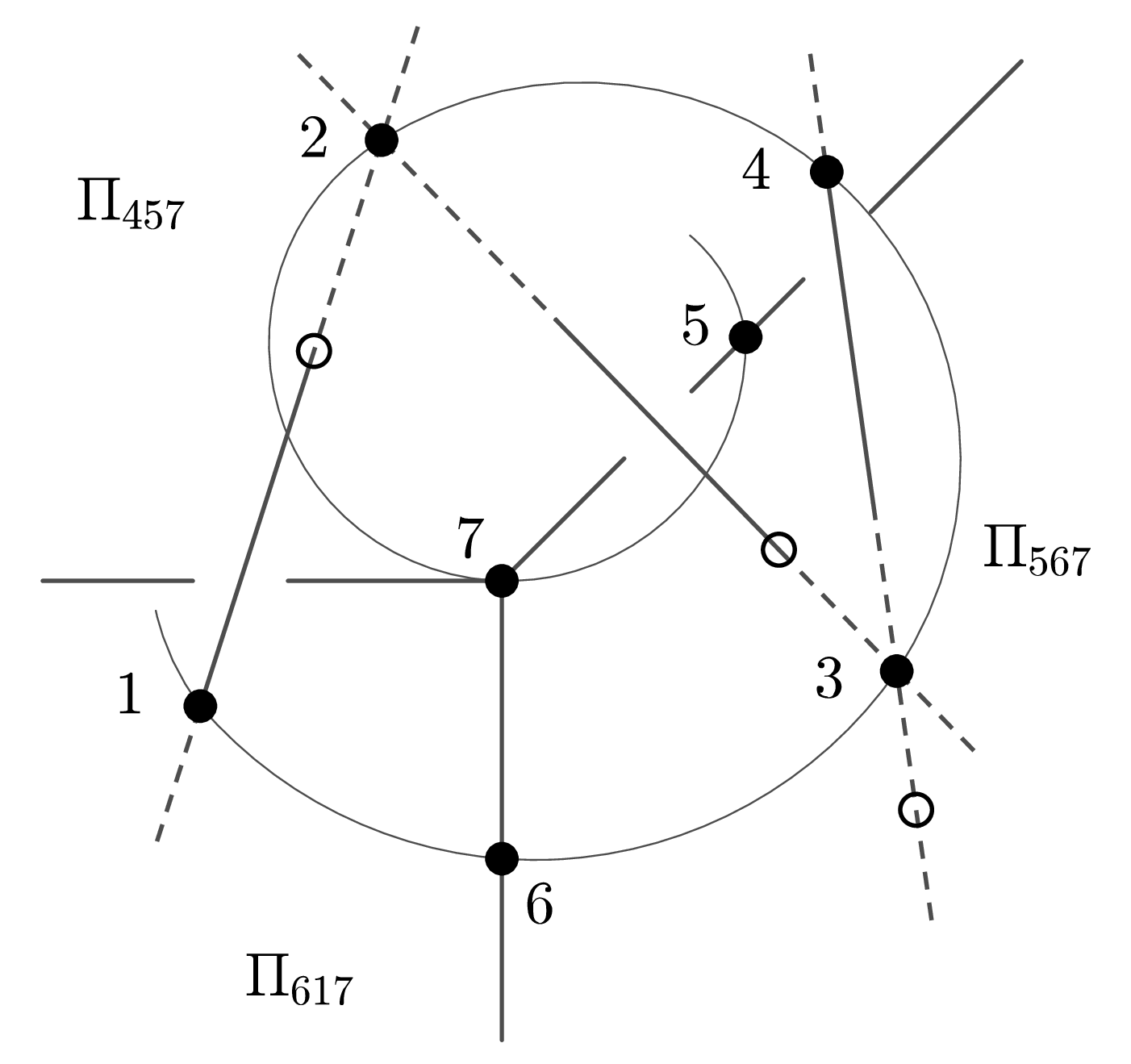}
	\caption{Seven points in $\mathbb{P}^3$ lying on a twisted cubic. By Corollary~\ref{corollary-main}, the three circled points and $P_7$ are coplanar. $\prod_{ijk}$ denotes the plane containing $P_i,P_j,P_k$. A line changes from continuous to dotted when it crosses one of these planes.}
	\label{picture7pointswithtwistedcubic}
\end{figure}

\begin{remark}
For $d=3,4$, in Corollary \ref{corollary-main} one can remove the assumption that the starting points do not lie on a common hyperplane. This follows from the fact that by \cite[Proposition 4.25]{CGMS18} the varieties $Y_{3,7}$ and $Y_{4,8}$ parametrizing point configurations supported on a hyperplane are contained in $V_{3,7}$ and $V_{4,8}$ respectively.
\end{remark}

\subsection{Equivalent formulations}

The Grassmann--Cayley algebra equation in Theorem~\ref{maintheoremintro2}~(ii) can be rewritten in many equivalent ways using the standard properties of the meet and join operations. This is the content of the next theorem.

\begin{theorem}
\label{equivalentformulations}
Let $\{j_1,\ldots,j_{d-2}\}=J_1\amalg J_2\amalg J_3\amalg J_4$ be a partition, where in each $J_i$ the indices are in ascending order ($J_i$ could possibly be empty). Then the Grassmann--Cayley algebra equation in Theorem~\ref{maintheoremintro2}~(ii) can be rewritten as
\begin{gather*}
(P_{i_1}P_{i_2}(\vee_{j\in J_1}P_j)\wedge P_{i_4}P_{i_5}P_{j_1}\ldots P_{j_{d-2}})\vee(P_{i_2}P_{i_3}(\vee_{j\in J_2}P_j)\wedge P_{i_5}P_{i_6}P_{j_1}\ldots P_{j_{d-2}})\\
\vee(P_{i_3}P_{i_4}(\vee_{j\in J_3}P_j)\wedge P_{i_6}P_{i_1}P_{j_1}\ldots P_{j_{d-2}})\vee(\vee_{j\in J_4}P_j)=0.
\end{gather*}
\end{theorem}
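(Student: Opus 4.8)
The plan is to obtain both Grassmann--Cayley expressions from a single \emph{point-absorption identity}, apply it several times, and then rearrange joins. The identity reads: if $Q$ is one of the vectors occurring among the factors of an extensor $B$, and $A$ is an extensor whose step plus that of $B$ is at least $\dim V$, then
\[
(A\vee Q)\wedge B=(A\wedge B)\vee Q .
\]
I would prove this directly from formula~\eqref{eq-meet}. In the expansion of $(A\vee Q)\wedge B$ the bracket slot automatically contains every factor of $B$, in particular $Q$; hence any monomial in which $Q$ is placed into the bracket slot has a repeated argument and vanishes identically as a bracket polynomial. Thus $Q$ is forced into the extensor slot, where --- being the last factor of $A\vee Q$ --- it factors out on the right, leaving exactly the expansion of $(A\wedge B)\vee Q$. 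Placing $Q$ at a factor slot of $B$ other than the first changes both sides by the same sign, so the identity holds as stated; and since the discarded monomials vanish identically, it is an identity of bracket polynomials valid for \emph{every} point configuration (which matters, as the $P_i$ are not assumed in general position).

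I would then apply this repeatedly. Write $B_1=P_{i_4}P_{i_5}P_{j_1}\cdots P_{j_{d-2}}$, $B_2=P_{i_5}P_{i_6}P_{j_1}\cdots P_{j_{d-2}}$, and $B_3=P_{i_6}P_{i_1}P_{j_1}\cdots P_{j_{d-2}}$ for the three hyperplane extensors appearing in Theorem~\ref{theorem-main}(ii). For every $i\in I_1\subseteq\{j_1,\dots,j_{d-2}\}$ the point $P_i$ is a factor of $B_1$, so absorbing the points $P_i$, $i\in I_1$, one at a time (starting from the largest index) into the first meet yields, by induction on $|I_1|$,
\[
\bigl(P_{i_1}P_{i_2}(\vee_{i\in I_1}P_i)\bigr)\wedge B_1=(P_{i_1}P_{i_2}\wedge B_1)\vee(\vee_{i\in I_1}P_i),
\]
and likewise for $I_2$ with $B_2$ and $I_3$ with $B_3$. (Each such step is legitimate: the extensor being acted on has step $\ge2$ and $B_\ell$ has step $d$, while here $\dim V=d+1$, so the step hypothesis for the meet holds; an empty $I_\ell$ makes the step vacuous.) Substituting these identities into the expression displayed in Theorem~\ref{equivalentformulations} converts it into
\begin{gather*}
(P_{i_1}P_{i_2}\wedge B_1)\vee(\vee_{i\in I_1}P_i)\vee(P_{i_2}P_{i_3}\wedge B_2)\vee(\vee_{i\in I_2}P_i)\\
\vee(P_{i_3}P_{i_4}\wedge B_3)\vee(\vee_{i\in I_3}P_i)\vee(\vee_{i\in I_4}P_i).
\end{gather*}

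Finally I would use graded-commutativity of $\vee$ (it is the wedge product of the exterior algebra) to move all the step-one factors $P_i$, with $i$ running over $I_1\amalg I_2\amalg I_3\amalg I_4=\{j_1,\dots,j_{d-2}\}$, past the three meets and to reorder them in increasing order. This produces $\pm P_{j_1}\cdots P_{j_{d-2}}$ on the right, together with the three meets $P_{i_1}P_{i_2}\wedge B_1$, $P_{i_2}P_{i_3}\wedge B_2$, $P_{i_3}P_{i_4}\wedge B_3$ on the left --- that is, $\pm$ the expression of Theorem~\ref{theorem-main}(ii). As the two bracket polynomials differ only by a nonzero sign, the equations ``(expression)$\,=0$'' are equivalent, which is exactly the assertion. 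I expect the only genuinely delicate point to be the sign and degeneracy bookkeeping in the point-absorption identity --- ensuring the ``bad'' monomials vanish identically rather than only generically, and that absorbing into an arbitrary factor slot of $B$ introduces no spurious sign --- both of which follow from the single observation that a bracket with a repeated argument is the zero polynomial. (If one insisted on literal equality rather than equality up to sign, one would additionally record the sign of the final reordering of joins in terms of $|I_1|,|I_2|,|I_3|,|I_4|$; this is routine and immaterial for the statement as given.)
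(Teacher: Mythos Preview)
Your proof is correct and follows essentially the same route as the paper's: both arguments rest on the observation that in the expansion~\eqref{eq-meet} of $\bigl(P_{i_1}P_{i_2}(\vee_{i\in I_1}P_i)\bigr)\wedge B_1$, any term sending some $P_i$ with $i\in I_1\subseteq\{j_1,\dots,j_{d-2}\}$ into the bracket vanishes by repetition, so the $I_1$-factors drop out as a trailing join. The only difference is packaging---you isolate this as a one-point ``absorption identity'' and iterate, whereas the paper expands all of $I_1$ at once---and you are slightly more explicit than the paper about the harmless global sign from reordering the joins.
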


\begin{proof}
We start with the equation above, and we show that it is equivalent to the equation in Theorem~\ref{maintheoremintro2}~(ii).
We expand the first meet using \eqref{eq-meet}
\begin{equation*}
\begin{split}
P_{i_1}P_{i_2}(\vee_{j\in J_1}P_j)\wedge P_{i_4}P_{i_5}P_{j_1}\ldots P_{j_{d-2}} = 
& +[P_{i_1}P_{i_4}P_{i_5}P_{j_1}\ldots P_{j_{d-2}}]P_{i_2}(\vee_{j\in J_1}P_j)  \\
& -[P_{i_2}P_{i_4}P_{i_5}P_{j_1}\ldots P_{j_{d-2}}]P_{i_1}(\vee_{j\in J_1}P_j).
\end{split}
\end{equation*}
Observe that we have only two non-zero summands in the previous expansion, since each bracket of the form $[P_jP_{i_4}P_{i_5}P_{j_1}\ldots P_{j_{d-2}}]=0$ for $j\in J_1$, being $J_1$ a subset of $\{j_1,\ldots,j_{d-2}\}$.
Thus, collecting $(\vee_{j\in J_1}P_j)$ and writing back in the Grassmann--Cayley algebra, yields the equality 
\begin{equation*}
P_{i_1}P_{i_2}(\vee_{j\in J_1}P_j)\wedge P_{i_4}P_{i_5}P_{j_1}\ldots P_{j_{d-2}} = (P_{i_1}P_{i_2}\wedge P_{i_4}P_{i_5}P_{j_1}\ldots P_{j_{d-2}})\vee(\vee_{j\in J_1}P_j).
\end{equation*}
Repeating the same reasoning for the other two meets and rearranging the joins, we obtain that the expression in the statement of the theorem is equal to
\begin{gather*}
(P_{i_1}P_{i_2}\wedge P_{i_4}P_{i_5}P_{j_1}\ldots P_{j_{d-2}})\vee(P_{i_2}P_{i_3}\wedge P_{i_5}P_{i_6}P_{j_1}\ldots P_{j_{d-2}})\\
\vee(P_{i_3}P_{i_4}\wedge P_{i_6}P_{i_1}P_{j_1}\ldots P_{j_{d-2}})\vee (\vee_{j\in J_1}P_j)\vee(\vee_{j\in J_2}P_j)\vee(\vee_{j\in J_3}P_j)\vee(\vee_{j\in J_4}P_j)=0,
\end{gather*}
which is the equation in Theorem~\ref{maintheoremintro2}~(ii) since $J_1\amalg J_2\amalg J_3\amalg J_4=\{j_1,\dots,j_{d-2}\}$.
\end{proof}

\par Each one of the Grassmann--Cayley algebra equations in Theorem~\ref{equivalentformulations} leads to a distinct, yet equivalent, reformulation of the geometric statement of Corollary~\ref{corollary-main}. Let us look at a specific example for $d=3$. 
\begin{example}
For each $I=\{i_1<\ldots<i_6\}\in\binom{[7]}{6}$, by Theorem~\ref{equivalentformulations} we can rewrite the equation in Theorem~\ref{maintheoremintro2}~(ii) as
\[
(P_{i_1}P_{i_2}P_{j_1}\wedge P_{i_4}P_{i_5}P_{j_1})\vee(P_{i_2}P_{i_3}\wedge P_{i_5}P_{i_6}P_{j_1})\vee(P_{i_3}P_{i_4}\wedge P_{i_6}P_{i_1}P_{j_1})=0.
\]
(Note that for different $I\in\binom{[7]}{6}$ we may choose different ways of partitioning the set $\{j_1\}$ as $J_1\amalg J_2\amalg J_3\amalg J_4$. Here we always choose $J_1=\{j_1\}$ and $J_2=J_3=J_4=\emptyset$ for the sake of example.) Therefore, an equivalent formulation of Corollary~\ref{corollary-main} for $d=3$ is the following. \emph{Let $P_1,\ldots,P_7$ be points in $\mathbb{P}^3$ not on a hyperplane. Then $P_1,\ldots,P_7$ lie on a quasi-Veronese curve if and only if for every $I\in\binom{[7]}{6}$, $I=\{i_1<\ldots<i_6\}$, $I^c=\{j_1\}$, the following $3$ linear subspaces of $\mathbb{P}^3$ lie on a hyperplane:
\begin{itemize}
\item The line of intersection of the planes $P_{i_1}P_{i_2}P_{j_1}$ and $P_{i_4}P_{i_5}P_{j_1}$;
\item The point of intersection of the line $P_{i_2}P_{i_3}$ with the plane $P_{i_5}P_{i_6}P_{j_1}$;
\item The point of intersection of the line $P_{i_3}P_{i_4}$ with the plane $P_{i_6}P_{i_1}P_{j_1}$.
\end{itemize}
}
\end{example}

\subsection{Generalized Brianchon's Theorem}

\par The projective dual of Pascal's Theorem is known as Brianchon's Theorem: If six distinct lines are tangent to a smooth conic, then the three lines joining opposite vertices of the hexagon are concurrent \cite[Chapter XIV]{Cre60}. By dualizing one of the implications of the geometric statement in Corollary~\ref{corollary-main}, we obtain a generalization of Brianchon's Theorem to rational normal curves in $\mathbb{P}^d$, $d\geq3$. More precisely, if the characteristic of our base field $\Bbbk$ is zero or greater than $d$, then by \cite[\S5]{Pie77} (see also \cite[Chapter~III, Exercise~A-2]{ACGH85} in characteristic zero) we have that the set of osculating hyperplanes to a rational normal curve, viewed as points in the dual $(\mathbb{P}^d)^*$, is a rational normal curve in $(\mathbb{P}^d)^*$. We can then state the following result.

\begin{corollary}[Generalized Brianchon's Theorem]
\label{generalizedbrianchon'stheorem}
Let $\Bbbk$ be an algebraically closed field with $\chara\Bbbk=0$ or $\chara\Bbbk>d$, and let $H_1,\ldots,H_{d+4}$ be hyperplanes in $\mathbb{P}^d$ in general linear position which osculate a rational normal curve. Then for every $I\in\binom{[d+4]}{6}$, $I=\{i_1<\ldots<i_6\}$, $I^c=\{j_1<\ldots<j_{d-2}\}$, the following $d+1$ hyperplanes have nonempty intersection:
	\begin{itemize}
		\item The linear span of $H_{i_1}\cap H_{i_2}$ with the point $H_{i_4}\cap H_{i_5}\cap H_{j_1}\cap \ldots\cap H_{j_{d-2}}$;
		\item The linear span of $H_{i_2}\cap H_{i_3}$ with the point $H_{i_5}\cap H_{i_6}\cap H_{j_1}\cap\ldots\cap H_{j_{d-2}}$;
		\item The linear span of $H_{i_3}\cap H_{i_4}$ with the point $H_{i_6}\cap H_{i_1}\cap H_{j_1}\cap\ldots\cap H_{j_{d-2}}$;
		\item The hyperplanes $H_{j_1},\ldots,H_{j_{d-2}}$.
	\end{itemize}
\end{corollary}


\section{Application to seven points on a twisted cubic}
\label{applicationofresults}

For simplicity, in this section we work over $\mathbb{C}$. The case of seven points on a twisted cubic in $\mathbb{P}^3$ is of great interest: in 1915 H.~White proved the following result.

\begin{theorem}{\cite{Whi15}}
\label{white'sresult}
Fix seven distinct points on a twisted cubic. Let $H_1,\ldots,H_7$ be seven planes whose union contains the $21$ lines spanned by the seven points (each one of these planes has to contain exactly three of the initial points). Then $H_1,\ldots,H_7$ osculate a second twisted cubic.
\end{theorem}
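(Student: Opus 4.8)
The plan is to translate White's statement, via projective duality, into an instance of Corollary~\ref{corollary-main}, and then to verify the resulting bracket identities by a symmetric-function computation that is cut down dramatically by the symmetry of the Fano plane. First I would record the combinatorics. If seven planes $H_1,\dots,H_7$ cover the $21$ lines $\overline{P_iP_j}$ and each contains exactly three of the $P_i$, then each plane accounts for exactly $\binom{3}{2}=3$ of those lines, and $7\cdot 3=21$; hence the seven triples of indices form a Steiner triple system $S(2,3,7)$, i.e.\ the lines of a Fano-plane structure on $\{P_1,\dots,P_7\}$, and $H_\ell=\overline{P_aP_bP_c}$ is the unique plane through its triple $\ell=\{a,b,c\}$. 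Since $S_7$ permutes both the points and the Fano structures transitively while the locus $V_{3,7}$ below is $S_7$-invariant, I may fix one standard Fano structure throughout.

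Next I would dualize. The osculating planes of a twisted cubic $C'\subset\mathbb{P}^3$ form, in the dual projective space $(\mathbb{P}^3)^{*}$, again a twisted cubic $C'^{\dagger}$ (parametrized in suitable coordinates by $u\mapsto[-u^3:3u^2:-3u:1]$); the assignment $C'\mapsto C'^{\dagger}$ is $\PGL_4$-equivariant, hence surjective onto the twisted cubics of $(\mathbb{P}^3)^{*}$, and injective by biduality $(C'^{\dagger})^{\dagger}=C'$. Therefore ``$H_1,\dots,H_7$ osculate a twisted cubic'' is equivalent to ``the seven points $Q_\ell:=[H_\ell]\in(\mathbb{P}^3)^{*}$ lie on a twisted cubic'', i.e.\ to $(Q_\ell)\in V_{3,7}$; by Corollary~\ref{corollary-main} (together with the Remark removing the hyperplane hypothesis when $d=3$) this is in turn equivalent to the coplanarity assertions listed there for the $Q_\ell$, after fixing an ordering of the seven Fano lines. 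A priori Corollary~\ref{corollary-main} only places the $Q_\ell$ on a quasi-Veronese curve; that for general $P_i$ they lie on a \emph{smooth} twisted cubic, as White asserts, can be checked on one explicit configuration and then extended by closure.

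Then I would compute. Parametrizing $C$ by the degree-$3$ Veronese $\nu_3\colon\mathbb{P}^1\to\mathbb{P}^3$ with $P_i=\nu_3(t_i)$, and identifying $(\mathbb{P}^3)^{*}$ with the space $\mathbb{P}(\Sym^3)$ of binary cubics, the plane through $\nu_3(t_a),\nu_3(t_b),\nu_3(t_c)$ is the cubic $\prod_{i\in\ell}(T-t_i)$, so $Q_\ell=[1:-e_1:e_2:-e_3]$ with $e_k=e_k(t_a,t_b,t_c)$. Each bracket $[Q_aQ_bQ_cQ_d]$ is then the determinant of a $4\times4$ matrix whose rows are these vectors of elementary symmetric functions; accounting for the points shared by the four Fano lines (any two lines meet in exactly one point) lets one factor it as a signed product of Vandermonde-type factors $t_i-t_j$, so the identity of Theorem~\ref{theorem-main}(ii) becomes a polynomial identity in $t_1,\dots,t_7$. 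Here the symmetry pays off: $\Aut$ of the Fano plane is $\PGL_3(\mathbb{F}_2)$, which acts transitively on the seven lines, so all seven conditions of Corollary~\ref{corollary-main} (one per excluded line, since $|I^c|=1$ when $d=3$) are equivalent, and it suffices to prove a single one of these identities.

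The main obstacle is exactly that last identity: one must organize the two four-fold products of $4\times4$ brackets so that the Fano incidences force both sides to have the same monomial expansion in the $t_i$. I would attack it by first writing $[Q_aQ_bQ_cQ_d]$ in closed product form and then matching the two sides factor by factor, with a direct polynomial verification as a legitimate (if less illuminating) fallback. A secondary point needing care is making the biduality step and the upgrade from ``quasi-Veronese'' to ``smooth twisted cubic'' rigorous. Finally, one can bypass the dual picture entirely through the Gale transform: $V_{3,7}$-membership of the $Q_\ell$ is equivalent to their Gale-dual seven points in $\mathbb{P}^2$ lying on a conic, i.e.\ to Pascal's Theorem itself — but this only trades the computation above for the task of identifying that Gale dual explicitly, which is of comparable difficulty.
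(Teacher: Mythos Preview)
The paper does not prove Theorem~\ref{white'sresult}: it is quoted as a classical result of H.~White~\cite{Whi15} and then used, together with Theorem~\ref{theorem-main} and projective duality, as an \emph{input} to the application in \S\ref{applicationofresults}. There is therefore no proof in the paper to compare your attempt against.

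What you have written is a plausible outline for an independent proof of White's theorem via the paper's own Corollary~\ref{corollary-main}, applied in the dual $\mathbb{P}^3$. The strategy is sound: the duality between a twisted cubic and its curve of osculating planes is standard, the Fano-plane combinatorics of the seven triples is correct, and the symmetry reduction via $\PGL_3(\mathbb{F}_2)$ to a single bracket identity is legitimate (once one observes that the conic polynomial $\phi_I$ spans a one-dimensional $S_6$-representation and hence changes by at most a sign under reordering the six retained indices). However, the proposal is a plan rather than a proof. The central step---verifying that the one surviving bracket polynomial in $t_1,\dots,t_7$ vanishes identically---is explicitly left undone (you call it ``the main obstacle'' and only say how you would attack it), and the upgrade from ``lies on a quasi-Veronese curve'' to ``lies on a smooth twisted cubic'', needed to recover White's precise conclusion, is only gestured at. If you actually carry out that bracket identity and exhibit one explicit smooth example, you would have a genuine alternative proof; as it stands, the argument is incomplete.
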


\begin{remark}
\label{geometryofwhite'ssituation}
As White discussed in \cite{Whi15}, the geometry involved in Theorem~\ref{white'sresult} is quite rich. For instance, label by $a,b,c,d,e,f,g$ the seven fixed points on the twisted cubic. Let $X$ be the set consisting of these points. Then each one of the planes $H_1,\ldots,H_7$ has to contain exactly three of the points in $X$. The collection of these $3$-elements subsets of $X$ forms a Steiner's system $S(2,3,7)$, which is the Fano plane $\mathbb{P}_{\mathbb{F}_2}^2$. An example of such system on $X$ is
\[
\{\{a,d,e\},\{a,f,g\},\{b,d,f\},\{b,e,g\},\{c,d,g\},\{c,e,f\},\{a,b,c\}\},
\]
and this can be determined in $|S_7|/|\PGL(3,\mathbb{F}_2)|=30$ different ways. Therefore, the planes $H_1,\ldots,H_7$ can be chosen in $30$ distinct ways, up to relabeling them. Finally, observe that the planes $H_1,\ldots,H_7$ are in general linear position. To prove this, first notice that $H_1,\ldots,H_7$ osculate a second twisted cubic by Theorem~\ref{white'sresult}. Therefore, by \cite[Chapter~III, Exercise~A-2]{ACGH85}, the points in $(\mathbb{P}^3)^*$ dual to $H_1,\ldots,H_7$ lie on a twisted cubic. Since distinct points on a twisted cubic are in general linear position, we have that also $H_1,\ldots,H_7$ are in general linear position.
\end{remark}

The combination of Theorem~\ref{maintheoremintro2}, Theorem~\ref{white'sresult}, and projective duality yields the following property of the planes $H_1,\ldots,H_7$.

\begin{theorem}
	With the setup of Theorem~\ref{white'sresult}, let $I=\{i_1<\ldots<i_6\}\subseteq[7]$ and $I^c=\{j\}$. Then the intersection of the following three planes is a point, and it lies on $H_j$:
	\begin{itemize}
		\item The linear span of $H_{i_1}\cap H_{i_2}$ with the point $H_{i_4}\cap H_{i_5}\cap H_{j}$;
		\item The linear span of $H_{i_2}\cap H_{i_3}$ with the point $H_{i_5}\cap H_{i_6}\cap H_{j}$;
		\item The linear span of $H_{i_3}\cap H_{i_4}$ with the point $H_{i_6}\cap H_{i_1}\cap H_{j}$.
	\end{itemize}
\end{theorem}

\begin{proof}
We adopt the following notations. A plane $H_i$ is simply denoted by its subscript $i$. Moreover, if we want to think of the plane $i$ as a point in the dual projective space $(\mathbb{P}^3)^*$, then we denote it by $i^*$. Observe that, by the discussion in Remark~\ref{geometryofwhite'ssituation}, the planes $1,\ldots,7$ are in general linear position (hence, also the points $1^*,\ldots,7^*$ are).

Let us first prove that the intersection of the three planes is a point. Assume by contradiction this is not the case. Then, in the Grassmann--Cayley algebra of $\mathbb{P}^3$, we must have that
\[
((i_1\wedge i_2)\vee(i_4\wedge i_5\wedge j))\wedge((i_2\wedge i_3)\vee(i_5\wedge i_6\wedge j))\wedge((i_3\wedge i_4)\vee(i_6\wedge i_1\wedge j))=0.
\]
Dually, in the Grassmann--Cayley algebra of $(\mathbb{P}^3)^*$ we have that
\begin{equation}
\label{threepointcollinear}
(i_1^*i_2^*\wedge i_4^*i_5^*j^*)\vee(i_2^*i_3^*\wedge i_5^*i_6^*j^*)\vee(i_3^*i_4^*\wedge i_6^*i_1^*j^*)=0,
\end{equation}
which says that the three points in parentheses are collinear. Observe that the point $i_1^*i_2^*\wedge i_4^*i_5^*j^*$ on the line $i_1^*i_2^*$ is different from $i_1^*$ and $i_2^*$ because $i_1^*,i_2^*,i_4^*,i_5^*,j^*$ are in general linear position. A similar argument applies to $(i_2^*i_3^*\wedge i_5^*i_6^*j^*)$ and $(i_3^*i_4^*\wedge i_6^*i_1^*j^*)$. But then, equation~\eqref{threepointcollinear} implies that the points $i_1^*,i_2^*,i_3^*,i_4^*$ are coplanar, which is a contradiction.

Let us prove that the intersection point lies on $j$. By Theorem~\ref{white'sresult}, the planes $1,\ldots,7$ osculate a twisted cubic. Hence, by projective duality, the points $1^*,\ldots,7^*$ lie on a twisted cubic. Therefore, by Theorem~\ref{maintheoremintro2} we have that the following expression in the Grassmann--Cayley algebra of $(\mathbb{P}^3)^*$ is satisfied:
\[
(i_1^*i_2^*\wedge i_4^*i_5^*j^*)\vee(i_2^*i_3^*\wedge i_5^*i_6^*j^*)\vee(i_3^*i_4^*\wedge i_6^*i_1^*j^*)\vee j^*=0.
\]
Dually, in the projective space $\mathbb{P}^3$ we have that
\[
((i_1\wedge i_2)\vee(i_4\wedge i_5\wedge j))\wedge((i_2\wedge i_3)\vee(i_5\wedge i_6\wedge j))\wedge((i_3\wedge i_4)\vee(i_6\wedge i_1\wedge j))\wedge j=0.
\]
Using the geometric interpretation of the above Grassmann--Cayley algebra expression, we have our claim.
\end{proof}




\begin{thebibliography}{ACGH85}

\bibitem[ACGH85]{ACGH85} {\sc Enrico Arbarello, Maurizio Cornalba, Phillip A. Griffiths, Joseph Harris}, \textit{Geometry of Algebraic Curves. Volume I}, Grundlehren der matematischen Wissenschaften, vol. 267, Springer, 1985.

\bibitem[Bac86]{Bac86} {\sc Isaak Bacharach}, {\em \"Uber den Cayley'schen Schrittpunktsatz}, Math. Ann. 26, no. 2, pp. 275--299, 1886.

\bibitem[BBR85]{BBR85} {\sc Marilena Barnabei, Andrea Brini, Gian-Carlo Rota}, {\em On the exterior calculus of invariant theory}, J. Algebra 96, no. 1, pp. 120--160, 1985.

\bibitem[B\.Z02]{BZ02} {\sc Maciej Borodzik, Henryk \.Zo\l\k adek}, {\em The Pascal theorem and some its generalizations}, Topol. Methods Nonlinear Anal. 19, no. 1, pp. 77--90, 2002.

\bibitem[Bra33]{Bra33} {\sc William Braikenridge}, {\em Exercitatio Geometrica de Descriptione Curvarum}, London, 1733.

\bibitem[CGMS18]{CGMS18} {\sc Alessio Caminata, Noah Giansiracusa, Han-Bom Moon, Luca Schaffler}, {\em Equations for point configurations to lie on a rational normal curve}, Adv. Math. 340, pp. 653--683, 2018.

\bibitem[CGMS20]{CGMS20} {\sc Alessio Caminata, Noah Giansiracusa, Han-Bom Moon, Luca Schaffler}, {\em Point configurations, phylogenetic trees, and dissimilarity vectors}, Proceedings of the National Academy of Sciences of the United States of America (PNAS) 118, n. 12, 2021.

\bibitem[Cay43]{Cay43} {\sc Arthur Cayley}, {\em On the intersection of curves}, Cambridge Math. J. 3 (1843), pp. 211--213; {\em Collected math papers} I, vols. 25--27, Cambridge Univ. Press, Cambridge, 1889.

\bibitem[Cha85]{Cha85} {\sc Michel Chasles}, {\em Trait\'e des sections coniques}, Gauthier-Villars, Paris, 1885.

\bibitem[Cob61]{Cob61} {\sc Arthur B. Coble}, {\em Algebraic geometry and theta functions}, Revised printing. American Mathematical Society Colloquium Publication, vol. X, American Mathematical Society, Providence, R.I. 1961 vii+282 pp.

\bibitem[Cre60]{Cre60}  {\sc Luigi Cremona}, {\em Elements of Projective Geometry. 3rd ed. Translated by Charles Leudesdorf}, Dover Publications, Inc., New York 1960 xx+302 pp.

\bibitem[EGH96]{EGH96} {\sc David Eisenbud, Mark Green, Joe Harris}, {\em Cayley-Bacharach theorems and conjectures}, Bull. Amer. Math. Soc. (N.S.) 33, no. 3, pp. 295--324, 1996.

\bibitem[Jam30]{Jam30} {\sc Glenn James}, {\em Generalizations of Pascal's and Brianchon's Theorems}, { Amer. Math. Monthly} 37, no. 2, pp. 78--80, 1930.

\bibitem[Mac35]{Mac35} {\sc Colin MacLaurin}, {\em A Letter from Mr. Colin Mac Laurin, Math. Prof. Edinburg. F.R.S. to Mr. John Machin, Ast. Prof. Gresh. \& Secr. R.S. concerning the Description of Curve Lines}, Phil. Trans. 39, pp. 143--165, 1735--36.

\bibitem[M\"ob48]{Mob48} {\sc August F. M\"obius}, \newblock {\em Verallgemeinerung des Pascalschen Theorems, das in ein Kegelschnitt beschriebene Sechseck betreffend}, {J. Reine Angew. Math.} 36, pp. 216--220, 1848.

\bibitem[Pas40]{Pas40} {\sc Blaise Pascal}, {\em Essay pour les coniques}, {Nieders\"achsiche Landesbibliothek, Gottfried Wilhelm Leibniz Bibliothek}, 1640.

\bibitem[Pie77]{Pie77}{\sc Ragni Piene}, {\em Numerical characters of a curve in projective n-space}, Real and complex singularities (Proc. Ninth Nordic Summer School/NAVF Sympos. Math., Oslo, 1976), pp. 475--495. Sijthoff	and Noordhoff, Alphen aan den Rijn, 1977.

\bibitem[Seg45]{Seg45} {\sc Beniamino Segre}, {\em A four-dimensional analogue of Pascal's theorem for conics}, {Amer. Math. Monthly} 52, pp. 119--131, 1945.

\bibitem[ST19]{ST19} {\sc Jessica Sidman, Will Traves}, \newblock {\em Special positions of frameworks and the Grassmann--Cayley algebra}, Handbook of Geometric Constraint Systems Principles, 85--106,
Discrete Math. Appl. (Boca Raton), CRC Press, Boca Raton, FL, 2019.

\bibitem[Stu08]{Stu08} {\sc Bernd Sturmfels}, \newblock {\em Algorithms in Invariant Theory. Second Edition}, in {\em Texts and Monographs in Symbolic Computation}, Springer-Verlag, Vienna, 2008.

\bibitem[SW89]{SW89} {\sc Bernd Sturmfels, Neil White}, \newblock {\em Gr\"obner bases and invariant theory}, Adv. Math. 76, pp. 245--259, 1989.

\bibitem[SW91]{SW91} {\sc Bernd Sturmfels, Walter Whiteley}, \newblock {\em On the synthetic factorization of projectively invariant polynomials}, J. Symbolic Comput. 11, pp. 439--454, 1991.

\bibitem[Tra13]{Tra13} {\sc Will Traves}, \newblock {\em From Pascal's theorem to $d$-constructible curves}, { Amer. Math. Monthly} 120, no. 10, pp. 901--915, 2013.

\bibitem[Whi15]{Whi15} {\sc Henry S. White}, \newblock {\em Seven points on a twisted cubic curve}, Proc. Natl. Acad. Sci. USA, Vol. 1, No. 8 , pp. 464--466, 1915.

\bibitem[Whi91]{Whi91} {\sc Neil L. White}, \newblock {\em Multilinear Cayley factorization},  J. Symbolic Comput. 11, pp. 421--438, 1991.
\end{thebibliography}
\end{document}